\theoremstyle{plain}
\newtheorem{thm}{Theorem}
\newtheorem{prop}[thm]{Proposition}
\newtheorem{lem}[thm]{Lemma}
\newtheorem{cor}[thm]{Corollary}
\newtheorem{hyp}[thm]{Hypothesis}
\theoremstyle{definition}
\newtheorem{defn}[thm]{Definition}
\theoremstyle{remark}
\newtheorem{rem}[thm]{Remark}
\global\long\def\e{\mathrm{e}}
\global\long\def\rd{\mathrm{d}}
\global\long\def\bu{\boldsymbol{u}}
\global\long\def\bv{\boldsymbol{v}}
\global\long\def\bx{\boldsymbol{x}}
\global\long\def\be{\boldsymbol{e}}
\global\long\def\bphi{\boldsymbol{\varphi}}
\global\long\def\bpsi{\boldsymbol{\psi}}
\global\long\def\bnabla{\boldsymbol{\nabla}}
\global\long\def\bcdot{\boldsymbol{\cdot}}
\global\long\def\bwedge{\boldsymbol{\wedge}}
\global\long\def\bzero{\boldsymbol{0}}
\global\long\def\widebar#1{\bar{#1}}
\global\long\def\BMO{B\mkern-1.5muM\mkern-2muO}
\begin{document}

\title{On the asymptotic stability of\\
steady flows with nonzero flux\\
in two-dimensional exterior domains}

\author{\href{mailto:guill093@umn.edu}{Julien Guillod}\\
{\small{School of Mathematics}}\\
{\small{University of Minnesota}}}

\date{May 12, 2016}
\maketitle
\begin{abstract}
The Navier-Stokes equations in a two-dimensional exterior domain are
considered. The asymptotic stability of stationary solutions satisfying
a general hypothesis is proven under any $L^{2}$-perturbation. In
particular the general hypothesis is valid if the steady solution
is the sum of the critically decaying flux carrier with flux $\left|\Phi\right|<2\pi$
and a small subcritically decaying term. Under the central symmetry
assumption, the general hypothesis is also proven for any critically
decaying steady solutions under a suitable smallness condition.
\end{abstract}

\paragraph{Keywords}
Navier-Stokes equations, Stability of steady solutions, Nonzero flux
\paragraph{MSC class}
35Q30, 35B35, 76D05

\section{Introduction}

We consider the Navier-Stokes equations in a domain $\Omega=\mathbb{R}^{2}\setminus\widebar B$
where $B$ is a bounded simply connected Lipschitz domain,
\begin{equation}
\begin{aligned}\partial_{t}\bu+\bu\bcdot\bnabla\bu & =\Delta\bu-\bnabla p\,,\qquad\qquad & \bnabla\bcdot\bu & =0\,,\\
\bu\big|_{t=0} & =\bu_{0}\,, & \bu\big|_{\partial\Omega} & =\bu^{*}\,,
\end{aligned}
\label{eq:ns-u}
\end{equation}
with an inhomogeneous boundary condition $\bu^{*}$. Without loss
of generality, we will suppose that $\bzero\in B$. Therefore, there
exists $R>\varepsilon>0$, such that $B(\bzero,\varepsilon)\cap\Omega=\emptyset$
and $\widebar B\subset B(\bzero,R)$, where $B(\bzero,r)$ denotes
the open ball of radius $r$ centered at the origin. Under smallness
assumptions on $\bu^{*}$, we expect that the long-time behavior of
the solution to this system will be close to the corresponding steady-state,
\begin{align}
\Delta\bar{\bu}-\bnabla\bar{p} & =\bar{\bu}\bcdot\bnabla\bar{\bu}\,, & \bnabla\bcdot\bar{\bu} & =0\,, & \bar{\bu}\big|_{\partial\Omega} & =\bu^{*}\,.\label{eq:ns-ubar}
\end{align}
We note that for large values of the forcing $\bu^{*}$, the long-time
behavior is in general not expected to be close to the steady state
due to the presence of turbulence. In all what follows, we will only
consider the case where $\bar{\bu}$ decays to zero at infinity,
\[
\lim_{|\bx|\to\infty}\bar{\bu}=\bzero\,.
\]
The existence of such solutions for this steady-state problem is still
open in general \citet[Chapter I]{Galdi-IntroductiontoMathematical2011}.
Some hypotheses on the boundary condition $\bu^{*}$ ensuring the
existence of such a steady solution are explained later on.

For vanishing forcing $\bu^{*}=\bzero$ or in $\mathbb{R}^{2}$, \citet{Borchers-L2decayNavier1992}
established the asymptotic stability of trivial solution $\bar{\bu}=\bzero$
under $L^{2}$-perturbations. Even under more general hypotheses,
the solution is known to be asymptotic to the Oseen vortex \citep{Gallay.Wayne-Invariantmanifoldsand2002,Gallay.Wayne-Globalstabilityof2005,Iftimie-Selfsimilarasymptotics2011,Gallay-Longtimeasymptotics2013,Ervedoza-LongTimeDisk2014,Maekawa-AsymptoticStabilityWeak2015}.
The aim of this paper is to stud the stability of the steady solution
for a nonzero forcing $\bu^{*}$. More precisely, we introduce a general
hypothesis on the steady solutions such that their asymptotic stability
with respect to any $L^{2}$-perturbation can be proven by the energy
method. We will also determine some explicit criteria on the steady
solution implying the validity of the main hypothesis. The proof will
follow the ideas developed by \citet{Karch.Pilarczyk-AsymptoticStabilityof2011}
for stability of the Landau solutions for the three-dimensional case,
the main difference being \lemref{limit-zero}. In particular for
$\bar{\bu}=\bu^{*}=\bzero$, this later lemma allow us to propose
a simpler proof of the results of \citet{Masuda-Weaksolutionsof1984,Borchers-L2decayNavier1992}.

We now introduce the concept of criticality for the steady solutions
\citep[\S 1]{Guillod-review2015} in order to explain the cases which
can be treated by our method. The Navier-Stokes equations \eqref{ns-ubar}
in $\mathbb{R}^{2}$ have the following scaling symmetry, $\bar{\bu}\mapsto\lambda\bar{\bu}(\lambda\bx)$.
A scale-invariant steady solution $\bar{\bu}$ therefore has to decay
like $\left|\bx\right|^{-1}$ at infinity, and we call this decay
critical. The steady solutions decaying strictly faster than the critical
decay, more precisely bounded by $\left(\left|\bx\right|\log\left|\bx\right|\right)^{-1}$
is called subcritically decaying. To our knowledge, all the known
solutions of \eqref{ns-ubar} are of the form of a critically decaying
term plus a subcritically decaying term \citep{Jeffery-two-dimensionalsteadymotion1915,Hamel-SpiralfoermigeBewegungen1917,Galdi-StationaryNavier-Stokesproblem2004,Yamazaki-stationaryNavier-Stokesequation2009,Yamazaki-Uniqueexistence2011,Pileckas-existencevanishing2012,Hillairet-mu2013,Guillod-Generalizedscaleinvariant2015}.
The only notable exception is the solutions constructed by \citet[\S11]{Hamel-SpiralfoermigeBewegungen1917},
\begin{align}
\bar{\bu}_{\Phi,\mu,A} & =\frac{\Phi}{2\pi r}\be_{r}+\left(\frac{\mu}{2\pi r}+A\gamma(r)\right)\be_{\theta}\,, & \gamma(r) & =\begin{cases}
r^{1+\frac{\Phi}{2\pi}}\,, & \Phi\neq-4\pi\,,\\
\frac{\log r}{r}\,, & \Phi=-4\pi\,.
\end{cases}\label{eq:hamel}
\end{align}
where $A,\Phi,\mu\in\mathbb{R}$. For $A\neq0$ and $-4\pi\leq\Phi<-2\pi$,
the Hamel solution \eqref{hamel} is supercritically decaying, otherwise
it is critically decaying.

In contrast to the three-dimensional case studied by \citet{Heywood-stationaryStability1970,Borchers.Miyakawa-stabilityofexterior1995},
in two dimensions the Hardy inequality has a logarithmic correction
and therefore cannot be used to show that any critically decaying
steady solution satisfies our general hypothesis. We will show that
the critically decaying flux carrier $\bar{\bu}_{\Phi,0,0}$ with
$\left|\Phi\right|<2\pi$ satisfies the general hypothesis as well
as any subcritical solutions. We remark that our stability result
breaks down with respect to the size of the flux exactly at the same
value ($\Phi=-2\pi$) where the uniqueness of the steady solution
of \eqref{ns-ubar} is invalidated by the Hamel solution \eqref{hamel},
see for example \citet[\S XII.2]{Galdi-IntroductiontoMathematical2011}.
Moreover, we show that our general hypothesis is satisfied if everything
is assumed to be centrally symmetric. This improves the results of
\citet{Galdi-Stability2D2015,Yamazaki-RateConvergence2016} which
require axial symmetries with respect to both coordinates axes. However,
we will show that the pure rotating solution $\bar{\bu}_{0,\mu,0}$
with $\mu\neq0$ does not satisfy our general hypothesis without assuming
the central symmetry. Very recently, \citet{Maekaw-stabilitysteadycircular2016}
announced the asymptotic stability of the pure rotating solution in
the exterior of a disk under the assumption that both $\mu$ and the
$L^{2}$-perturbations are small, by calculating the spectrum of the
linearized operator almost explicitly. To our knowledge, this is the
only result together with the present ones showing the stability of
a nontrivial steady state for the two-dimensional Navier-Stokes equations.

\paragraph{Notations}

The space of smooth solenoidal functions having compact support in
$\Omega$ is denoted by $C_{0,\sigma}^{\infty}(\Omega)$. The completion
of $C_{0,\sigma}^{\infty}(\Omega)$ in the natural norm associated
to $L^{2}(\Omega)$, $H^{1}(\Omega)$, and $\dot{H}^{1}(\Omega)$
are denoted by $L_{\sigma}^{2}(\Omega)$, $H_{0,\sigma}^{1}(\Omega)$,
and $\dot{H}_{0,\sigma}^{1}(\Omega)$ respectively. The Sobolev space
$H_{\sigma}^{1}(\Omega)$ and its homogeneous counterpart $\dot{H}_{\sigma}^{1}(\Omega)$
denote the space of weakly divergence-free vector fields respectively
in $H^{1}(\Omega)$ and in $\dot{H}^{1}(\Omega)$. See for example
\citet[Chapters II \& III]{Galdi-IntroductiontoMathematical2011}
for the standard properties of these spaces.

\section{Main results}

We make the following general hypothesis which is required to show
the stability by the energy method:

\begin{hyp}\label{hyp:main}

Assume there exists $\delta\in[0,1)$ and a solution $\bar{\bu}\in\dot{H}_{\sigma}^{1}(\Omega)$
of \eqref{ns-ubar} (for some $\bu^{*}$) such that for all $\bv\in\dot{H}_{0,\sigma}^{1}(\mathbb{R}^{2})$,
\[
\big(\bv\bcdot\bnabla\bv,\bar{\bu}\big)\leq\delta\left\Vert \bnabla\bv\right\Vert _{2}^{2}\,.
\]
\end{hyp}
\begin{rem}
The linearized operator of \eqref{ns-ubar} around $\bar{\bu}$ is
defined on $H_{0,\sigma}^{1}(\Omega)$ by
\[
\mathcal{L}\bv=-\Delta\bv+\mathbb{P}\left(\bar{\bu}\bcdot\bnabla\bv\right)+\mathbb{P}\left(\bv\bcdot\bnabla\bar{\bu}\right)\,,
\]
where $\mathbb{P}$ is the Leray projection. This operator satisfies
\[
\bigl(\mathcal{L}\bv,\bv\bigr)=\left\Vert \bnabla\bv\right\Vert _{2}^{2}-\bigl(\bv\bcdot\bnabla\bv,\bar{\bu}\bigr)\,,
\]
and therefore is positive definite if and only if \hypref{main} holds.
\end{rem}
By defining
\begin{align*}
\bu & =\bar{\bu}+\bv\,, & \bu_{0} & =\bar{\bu}+\bv_{0}\,,
\end{align*}
the original system \eqref{ns-u} becomes 
\begin{equation}
\begin{aligned}\partial_{t}\bv+\bar{\bu}\bcdot\bnabla\bv+\bv\bcdot\bnabla\bar{\bu}+\bv\bcdot\bnabla\bv & =\Delta\bv-\bnabla q\,, & \bnabla\bcdot\bv & =0\,,\\
\bv\big|_{t=0} & =\bv_{0}\,, & \bv\big|_{\partial\Omega} & =\bzero\,.
\end{aligned}
\label{eq:ns-v}
\end{equation}

By using \hypref{main}, the existence of weak solutions can be shown
by the standard method:
\begin{defn}
\label{def:weak-solution}A weak solution $\bv$ of \eqref{ns-v}
is a vector-field $\bv\in L^{\infty}(0,\infty;L_{\sigma}^{2}(\Omega))\cap L^{2}(0,\infty;\dot{H}_{0,\sigma}^{1}(\Omega))$
such that
\begin{equation}
\begin{aligned}\bigl(\bv(t),\bphi(t)\bigr)+\int_{s}^{t}\Big[\bigl(\bnabla\bv,\bnabla\bphi\bigr)+\bigl(\bar{\bu}\bcdot\bnabla\bv,\bphi\bigr)+\bigl(\bv\bcdot\bnabla\bar{\bu},\bphi\bigr)+\bigl(\bv\bcdot\bnabla\bv,\bphi\bigr)\Big]\rd\tau\qquad\qquad\qquad\\
=\big(\bv(s),\bphi(s)\bigr)+\int_{s}^{t}\bigl(\bv,\dot{\bphi}\bigr)\rd\tau\,,
\end{aligned}
\label{eq:def-weak-solution}
\end{equation}
for all $s\geq t\geq0$ and $\bphi\in C^{1}(0,\infty,C_{0,\sigma}^{\infty}(\Omega))$.\end{defn}
\begin{thm}
\label{thm:existence}Assuming \hypref{main} holds, for $\bv_{0}\in L_{\sigma}^{2}(\Omega)$
and $T>0$, there exists a weak solution $\bv$ of \eqref{ns-v} in
the energy space
\[
X_{T}=L^{\infty}(0,T;L_{\sigma}^{2}(\Omega))\cap L^{2}(0,T;\dot{H}_{0,\sigma}^{1}(\Omega))\,,
\]
satisfying the energy inequality
\begin{equation}
\left\Vert \bv(t)\right\Vert _{2}^{2}+\left(1-\delta\right)\int_{s}^{t}\left\Vert \bnabla\bv(\tau)\right\Vert _{2}^{2}\rd\tau\leq\left\Vert \bv(s)\right\Vert _{2}^{2}\,.\label{eq:energy-inequality}
\end{equation}

\end{thm}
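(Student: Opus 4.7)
The plan is the standard Faedo--Galerkin construction, closed by \hypref{main} at the energy level and completed by an Aubin--Lions compactness argument localized to the compact support of the test functions.

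First I would fix an orthonormal basis $\{\bphi_{k}\}_{k\geq1}\subset C_{0,\sigma}^{\infty}(\Omega)$ of $L_{\sigma}^{2}(\Omega)$ and set $V^{n}=\operatorname{span}\{\bphi_{1},\ldots,\bphi_{n}\}$. The $n$-th approximation $\bv^{n}(t)=\sum_{k=1}^{n}c_{k}^{n}(t)\bphi_{k}$ is required to satisfy the projection of \eqref{ns-v} onto $V^{n}$ with initial data $c_{k}^{n}(0)=(\bv_{0},\bphi_{k})$; Cauchy--Lipschitz theory produces a unique local solution of this ODE system. Testing the projected equation with $\bv^{n}$ itself makes the terms $(\bar{\bu}\bcdot\bnabla\bv^{n},\bv^{n})$ and $(\bv^{n}\bcdot\bnabla\bv^{n},\bv^{n})$ vanish (both fields are solenoidal and $\bv^{n}|_{\partial\Omega}=\bzero$), while integration by parts gives $(\bv^{n}\bcdot\bnabla\bar{\bu},\bv^{n})=-(\bv^{n}\bcdot\bnabla\bv^{n},\bar{\bu})$. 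Extending $\bv^{n}$ by zero to $\mathbb{R}^{2}$ puts it in $\dot{H}_{0,\sigma}^{1}(\mathbb{R}^{2})$, so \hypref{main} applies and yields
\[
\frac{1}{2}\frac{\rd}{\rd t}\left\Vert \bv^{n}\right\Vert _{2}^{2}+(1-\delta)\left\Vert \bnabla\bv^{n}\right\Vert _{2}^{2}\leq0\,.
\]
Integrating in time produces the energy inequality \eqref{energy-inequality} for every $\bv^{n}$, makes the Galerkin ODE global on $[0,T]$, and gives uniform bounds of $(\bv^{n})$ in $X_{T}$.

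To pass to the limit I extract a subsequence with $\bv^{n}\rightharpoonup\bv$ weakly-$*$ in $L^{\infty}(0,T;L_{\sigma}^{2}(\Omega))$ and weakly in $L^{2}(0,T;\dot{H}_{0,\sigma}^{1}(\Omega))$. For the nonlinear terms I need strong convergence on the compact support $K$ of any fixed test function $\bphi\in C^{1}(0,T;C_{0,\sigma}^{\infty}(\Omega))$. Using Ladyzhenskaya's two-dimensional inequality in the projected equation bounds $\partial_{t}\bv^{n}$ uniformly in $L^{4/3}(0,T;V_{K}^{*})$, where $V_{K}\subset H_{0,\sigma}^{1}(\Omega)$ consists of functions supported in $K$; Rellich--Kondrachov then yields $\bv^{n}\to\bv$ strongly in $L^{2}(0,T;L^{2}(K))$, which is enough to pass to the limit in each quadratic term appearing in \eqref{def-weak-solution}. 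The energy inequality for $\bv$ follows by weak lower semicontinuity of the $L^{2}$ norms applied to the bound already established for $\bv^{n}$.

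The only genuine subtlety is the handling of $\bar{\bu}$, which is only assumed to lie in $\dot{H}_{\sigma}^{1}(\Omega)$ with no prescribed decay or global integrability. Because the test functions have compact support, $\bar{\bu}$ enters the analysis only locally, where $\bar{\bu}\in H_{\mathrm{loc}}^{1}(\Omega)\hookrightarrow L_{\mathrm{loc}}^{p}(\Omega)$ for every $p<\infty$; combined with the local strong convergence of $\bv^{n}$, this makes the passage to the limit in $(\bar{\bu}\bcdot\bnabla\bv^{n},\bphi)$ and $(\bv^{n}\bcdot\bnabla\bar{\bu},\bphi)$ routine. No information about $\bar{\bu}$ at infinity beyond what is already encoded in \hypref{main} is needed.
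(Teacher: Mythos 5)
Your proposal is correct and follows essentially the same route as the paper: a Faedo--Galerkin construction, the energy estimate closed by \hypref{main} after integrating $(\bv^{n}\bcdot\bnabla\bar{\bu},\bv^{n})$ by parts, local strong compactness to pass to the limit in the nonlinear terms, and weak lower semicontinuity for the energy inequality. The paper's proof is only a sketch referring to Temam, so your extra detail on the Aubin--Lions step and on the local integrability of $\bar{\bu}$ is a welcome elaboration rather than a deviation.
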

The main result of the paper is the asymptotic $L^{2}$-stability
of the steady solutions satisfying \hypref{main}.
\begin{thm}
\label{thm:stability}Assuming \hypref{main} holds, for $\bv_{0}\in L_{\sigma}^{2}(\Omega)$
and $\bv$ a weak solution of \eqref{ns-v} on $(0,\infty)$ satisfying
\eqref{energy-inequality}, then
\[
\lim_{t\to\infty}\left\Vert \bv(t)\right\Vert _{2}=0\,.
\]

\end{thm}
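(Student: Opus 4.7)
The plan is to proceed by contradiction using the translation-and-compactness scheme attributed in the introduction to \citet{Karch.Pilarczyk-AsymptoticStabilityof2011}. Taking $s=0$ in~\eqref{energy-inequality} shows that $t\mapsto\|\bv(t)\|_{2}$ is nonincreasing, so the limit $\alpha:=\lim_{t\to\infty}\|\bv(t)\|_{2}$ exists in $[0,\|\bv_{0}\|_{2}]$ and the statement reduces to ruling out $\alpha>0$. Assume for contradiction that $\alpha>0$, fix $T>0$, pick $t_{n}\to\infty$, and set $\bv_{n}(t,\bx):=\bv(t_{n}+t,\bx)$ on $[0,T]\times\Omega$. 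Applying~\eqref{energy-inequality} between $t_{n}$ and $t_{n}+T$ gives
\[
\int_{0}^{T}\bigl\|\bnabla\bv_{n}(\tau)\bigr\|_{2}^{2}\rd\tau\leq\frac{\|\bv(t_{n})\|_{2}^{2}-\|\bv(t_{n}+T)\|_{2}^{2}}{1-\delta}\longrightarrow0\,,
\]
since both endpoints tend to $\alpha^{2}$, while $\sup_{t\in[0,T]}\|\bv_{n}(t)\|_{2}\leq\|\bv_{0}\|_{2}$.

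Next, I would use~\eqref{ns-v} to bound $\partial_{t}\bv_{n}$ in a suitable negative Sobolev space locally in space (with \hypref{main} providing the needed control on the linear term $\bv\bcdot\bnabla\bar{\bu}$), and apply an Aubin-Lions-type compactness argument to extract a subsequence, still denoted $\bv_{n}$, that converges weakly in $L^{2}(0,T;\dot{H}_{0,\sigma}^{1}(\Omega))$ and strongly in $L^{2}((0,T)\times K)$ for every compact $K\subset\overline{\Omega}$, to some $\bw\in L^{\infty}(0,T;L_{\sigma}^{2}(\Omega))$. Weak lower semicontinuity together with the display above forces $\bnabla\bw\equiv\bzero$; since $\bw(t,\cdot)$ is then spatially constant a.e., lies in $L^{2}(\Omega)$, and $\Omega$ has infinite measure, one concludes $\bw\equiv\bzero$.

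The hard part is to upgrade the strong $L_{\mathrm{loc}}^{2}$-convergence to strong $L^{2}(\Omega)$-convergence: without a uniform bound on the mass of $\bv_{n}$ outside large balls, the vanishing of $\bw$ is perfectly compatible with $\|\bv_{n}(\tau)\|_{2}\to\alpha>0$ and no contradiction arises. This tail control is what the introduction flags as the novelty over the three-dimensional treatment, namely \lemref{limit-zero}: in three dimensions one would close the argument with the Hardy inequality, but in two dimensions Hardy only holds with a logarithmic correction, so a different mechanism is needed. I expect the lemma to deliver an estimate roughly of the form
\[
\int_{0}^{T}\bigl\|\bv_{n}(\tau)\bigr\|_{L^{2}(\Omega\setminus B(\bzero,R))}^{2}\rd\tau\longrightarrow 0 \quad\text{as } R\to\infty,\text{ uniformly in }n,
\]
presumably obtained by testing the equation against a radial cutoff of $\bv_{n}$ and absorbing the critical cross term via \hypref{main}. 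Granting such a tail estimate, $\bv_{n}\to\bzero$ strongly in $L^{2}((0,T)\times\Omega)$, so along a further subsequence $\|\bv_{n}(\tau_{*})\|_{2}\to0$ for some $\tau_{*}\in(0,T)$; but $\|\bv_{n}(\tau_{*})\|_{2}=\|\bv(t_{n}+\tau_{*})\|_{2}\to\alpha$, contradicting $\alpha>0$ and closing the argument.
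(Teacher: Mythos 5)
Your proposal has a genuine gap exactly at the step you yourself flag as ``the hard part'': the uniform-in-$n$ tail estimate
\[
\int_{0}^{T}\bigl\Vert\bv_{n}(\tau)\bigr\Vert_{L^{2}(\Omega\setminus B(\bzero,R))}^{2}\rd\tau\longrightarrow0\quad\text{as }R\to\infty
\]
is asserted but not proven, and the mechanism you suggest for it is unlikely to work. Testing the equation against a radial cutoff of $\bv_{n}$ requires using $\chi_{R}^{2}\bv_{n}$ as a test function, which a weak solution satisfying only the energy \emph{inequality} \eqref{energy-inequality} does not permit; even formally, the localized energy identity produces a pressure-flux term $\int q\,\bv_{n}\bcdot\bnabla\chi_{R}^{2}$ and a cubic term $\int\left|\bv_{n}\right|^{2}\bv_{n}\bcdot\bnabla\chi_{R}^{2}$, neither of which is controlled uniformly in $n$ by the energy norm in a two-dimensional exterior domain. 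Moreover \hypref{main} only controls the interaction with $\bar{\bu}$, not the escape of the perturbation's own energy to spatial infinity. Without the tail estimate your limit $\bw\equiv\bzero$ is, as you note, perfectly compatible with $\alpha>0$, so the contradiction never materializes. Your reading of \lemref{limit-zero} is also incorrect: it is not a spatial tail bound but a purely one-dimensional averaging statement, namely that $\frac{1}{t}\int_{0}^{t}\int_{0}^{s}(s-\tau)^{-1/2}f(\tau)\,\rd\tau\,\rd s\to0$ as $t\to\infty$ for any nonnegative $f\in L^{2}(0,\infty)$.

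The paper avoids compactness and contradiction entirely. It tests the weak formulation \eqref{def-weak-solution} with $\bpsi(\tau)=\e^{-(s-\tau)\mathcal{L}^{*}}\bv(s)$ to obtain the representation $\left\Vert\bv(s)\right\Vert_{2}^{2}=\bigl(\e^{-s\mathcal{L}}\bv_{0},\bv(s)\bigr)-\int_{0}^{s}\bigl((\bv\bcdot\bnabla\bv)(\tau),\e^{-(s-\tau)\mathcal{L}^{*}}\bv(s)\bigr)\rd\tau$, estimates the nonlinear term via the smoothing bound $\bigl\Vert\bnabla\e^{-t\mathcal{L}^{*}}\bphi\bigr\Vert_{2}\leq Ct^{-1/2}\left\Vert\bphi\right\Vert_{2}$ of \corref{Lstar} (this is where \hypref{main} enters), and arrives at
\[
\left\Vert\bv(s)\right\Vert_{2}\leq\bigl\Vert\e^{-s\mathcal{L}}\bv_{0}\bigr\Vert_{2}+C\left\Vert\bv_{0}\right\Vert_{2}\int_{0}^{s}(s-\tau)^{-1/2}\left\Vert\bnabla\bv(\tau)\right\Vert_{2}\rd\tau\,.
\]
Averaging in $s$ over $(0,t)$, applying \corref{L} to the first term and \lemref{limit-zero} with $f=\left\Vert\bnabla\bv(\cdot)\right\Vert_{2}\in L^{2}(0,\infty)$ to the second, gives that the Ces\`aro mean of $\left\Vert\bv(s)\right\Vert_{2}$ tends to zero; monotonicity of $\left\Vert\bv(t)\right\Vert_{2}$ then yields the full limit. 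If you wish to salvage a compactness route you would need to supply the missing tail control by an independent argument, which is precisely the difficulty this duality scheme is designed to bypass.
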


\section{Conditions for the validity of the main hypothesis}

In this section, we will deduce some explicit conditions on $\bar{\bu}$
which ensure the validity of our main \hypref{main} and therefore
its stability. We will prove that the critically decaying flux carrier
$\bar{\bu}_{\Phi,0,0}$ and subcritically decaying steady solutions
satisfy \hypref{main} under suitable smallness assumptions. We will
prove that the steady solution $\bar{\bu}_{0,\mu,0}$ corresponding
to a rotating cylinder doesn't satisfies \hypref{main}. Moreover
under the central symmetry assumption, any small critically decaying
solution satisfies \hypref{main}.

The most simple case concerning subcritically decaying steady solutions
is obtained by applying the Hardy inequality:
\begin{lem}
\textup{\label{lem:hardy}If for some $\delta>0$,} 
\begin{equation}
\sup_{\bx\in\Omega}\left(\left|\bx\right|\log(\varepsilon^{-1}\left|\bx\right|)\left|\bar{\bu}(\bx)\right|\right)\leq\frac{\delta}{2}\,,\label{eq:subcritical}
\end{equation}
then for all $\bv\in\dot{H}_{0,\sigma}^{1}(\Omega)$,
\[
\big|\big(\bv\bcdot\bnabla\bv,\bar{\bu}\big)\big|\le\delta\left\Vert \bnabla\bv\right\Vert _{2}^{2}\,.
\]
\end{lem}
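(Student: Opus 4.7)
The plan is to apply a Cauchy-Schwarz argument against a two-dimensional logarithmic Hardy inequality. Concretely, first bound
\[
\big|\bigl(\bv\bcdot\bnabla\bv,\bar{\bu}\bigr)\big|
\leq \int_{\Omega}|\bv(\bx)|\,|\bnabla\bv(\bx)|\,|\bar{\bu}(\bx)|\,\rd\bx,
\]
then use the pointwise hypothesis \eqref{subcritical} to replace $|\bar{\bu}(\bx)|$ by $\tfrac{\delta}{2}\bigl(|\bx|\log(\varepsilon^{-1}|\bx|)\bigr)^{-1}$, and finally apply Cauchy--Schwarz to split this into
\[
\frac{\delta}{2}\left(\int_{\Omega}\frac{|\bv(\bx)|^{2}}{|\bx|^{2}\log^{2}(\varepsilon^{-1}|\bx|)}\,\rd\bx\right)^{1/2}\|\bnabla\bv\|_{2}.
\]

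The main step is therefore the two-dimensional Hardy inequality with logarithmic weight:
\[
\int_{\Omega}\frac{|\bv(\bx)|^{2}}{|\bx|^{2}\log^{2}(\varepsilon^{-1}|\bx|)}\,\rd\bx
\leq 4\,\|\bnabla\bv\|_{2}^{2},
\qquad \bv\in\dot{H}_{0,\sigma}^{1}(\Omega).
\]
By density it suffices to prove this for $\bv\in C_{0,\sigma}^{\infty}(\Omega)$, which after extension by zero may be viewed as a compactly supported smooth function on $\{|\bx|\ge\varepsilon\}$ vanishing on the circle $|\bx|=\varepsilon$. In polar coordinates $(r,\theta)$, Fubini reduces the estimate to a one-dimensional inequality along each ray. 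Setting $s=\log(r/\varepsilon)$, $\tilde{\bv}(s,\theta)=\bv(\varepsilon \e^{s},\theta)$, the radial integral becomes
\[
\int_{\varepsilon}^{\infty}\frac{|\bv|^{2}}{r\log^{2}(r/\varepsilon)}\,\rd r
=\int_{0}^{\infty}\frac{|\tilde{\bv}(s,\theta)|^{2}}{s^{2}}\,\rd s,
\]
with $\tilde{\bv}(0,\theta)=0$, while $\int_{0}^{\infty}|\partial_{s}\tilde{\bv}|^{2}\rd s=\int_{\varepsilon}^{\infty}r|\partial_{r}\bv|^{2}\rd r$. The classical one-dimensional Hardy inequality on a half-line with vanishing condition at $0$ yields the constant $4$, and integrating over $\theta$ gives the announced weighted estimate.

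Combining the two steps yields the bound $\tfrac{\delta}{2}\cdot 2\|\bnabla\bv\|_{2}\cdot\|\bnabla\bv\|_{2}=\delta\|\bnabla\bv\|_{2}^{2}$, which is exactly the claimed inequality. The only non-routine ingredient is the logarithmic Hardy inequality; in two dimensions it is essential that the weight contains the factor $\log^{2}(\varepsilon^{-1}|\bx|)$ (so that $s$ ranges over a half-line and the boundary contribution at $s=0$ vanishes), which is precisely what the hypothesis \eqref{subcritical} is tailored to.
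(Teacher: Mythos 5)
Your proof is correct and follows essentially the same route as the paper: Cauchy--Schwarz combined with the two-dimensional logarithmic Hardy inequality $\bigl\Vert\bv/(|\bx|\log(\varepsilon^{-1}|\bx|))\bigr\Vert_{2}\leq2\left\Vert\bnabla\bv\right\Vert_{2}$, with the same constant. The only difference is that the paper simply cites this Hardy inequality (Galdi, Theorem II.6.1) while you prove it by the standard reduction to the one-dimensional half-line Hardy inequality in the variable $s=\log(r/\varepsilon)$, which is a valid and self-contained substitute.
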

\begin{proof}
This is a simple consequence of the Hardy inequality \citep[Theorem II.6.1]{Galdi-IntroductiontoMathematical2011},
\[
\left\Vert \frac{\bv}{\left|\bx\right|\log(\varepsilon^{-1}\left|\bx\right|)}\right\Vert \leq2\left\Vert \bnabla\bv\right\Vert _{2}\,.
\]

\end{proof}
By assuming that the field $\bv$ is centrally symmetric,
\[
\bv(\bx)=-\bv(-\bx)\,,
\]
we can obtain an Hardy inequality without logarithmic correction,
and therefore can treat the stability of critically decaying steady
solutions:
\begin{lem}
\label{lem:hardy-central}There exists some constant $C>0$ depending
only on the domain $\Omega$, such that if
\begin{equation}
\sup_{\bx\in\Omega}\left(\left|\bx\right|\left|\bar{\bu}(\bx)\right|\right)\leq\frac{\delta}{C}\,,\label{eq:critical}
\end{equation}
for some $\delta>0$, then for any centrally symmetric $\bv\in\dot{H}_{0,\sigma}^{1}(\Omega)$,
\[
\big|\big(\bv\bcdot\bnabla\bv,\bar{\bu}\big)\big|\le\delta\left\Vert \bnabla\bv\right\Vert _{2}^{2}\,.
\]
\end{lem}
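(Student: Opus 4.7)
The plan is to reduce the bilinear estimate to a centrally symmetric Hardy inequality
\[
\left\Vert \frac{\bv}{|\bx|}\right\Vert _{2}\leq C\left\Vert \bnabla\bv\right\Vert _{2},
\]
valid for every centrally symmetric $\bv\in\dot{H}_{0,\sigma}^{1}(\Omega)$, with $C$ depending only on $\Omega$. Granting this, a single Cauchy--Schwarz step gives
\[
\bigl|\bigl(\bv\bcdot\bnabla\bv,\bar{\bu}\bigr)\bigr|\leq\bigl\Vert |\bx|\bar{\bu}\bigr\Vert _{\infty}\left\Vert \frac{\bv}{|\bx|}\right\Vert _{2}\left\Vert \bnabla\bv\right\Vert _{2}\leq C\bigl\Vert |\bx|\bar{\bu}\bigr\Vert _{\infty}\left\Vert \bnabla\bv\right\Vert _{2}^{2},
\]
and the smallness hypothesis $\bigl\Vert |\bx|\bar{\bu}\bigr\Vert _{\infty}\leq\delta/C$ yields the claim with the same constant $C$.

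To prove the Hardy inequality I would split at the radius $R$ from the setup (where $\widebar{B}\subset B(\bzero,R)$). In the exterior region $|\bx|>R$, every circle of radius $r>R$ lies entirely in $\Omega$, so the pointwise identity $\bv(-\bx)=-\bv(\bx)$ forces each Cartesian component $v_{i}(r,\theta)$ to satisfy $v_{i}(r,\theta+\pi)=-v_{i}(r,\theta)$. Expanded in $\theta$, the function $v_{i}(r,\cdot)$ is therefore supported on odd Fourier modes, in particular has zero mean, and the sharp Poincaré inequality on $S^{1}$ gives $\Vert v_{i}(r,\cdot)\Vert_{L^{2}(0,2\pi)}\leq\Vert\partial_{\theta}v_{i}(r,\cdot)\Vert _{L^{2}(0,2\pi)}$ with constant one. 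Summing over $i=1,2$, using the pointwise bound $|\partial_{\theta}\bv|^{2}\leq r^{2}|\bnabla\bv|^{2}$, and integrating in $r$ yields
\[
\int_{|\bx|>R}\frac{|\bv|^{2}}{|\bx|^{2}}\rd\bx\leq\int_{|\bx|>R}|\bnabla\bv|^{2}\rd\bx.
\]

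In the bounded region $\Omega\cap B(\bzero,R)$ I would instead exploit that $|\bx|\geq\varepsilon>0$. Extending $\bv$ by zero inside $B$ gives a function in $H^{1}(B(\bzero,R))$ that vanishes on the set $B$ of positive measure, so a standard Poincaré inequality yields $\Vert\bv\Vert_{L^{2}(B(\bzero,R))}\leq C(\Omega)\Vert\bnabla\bv\Vert_{2}$, whence
\[
\int_{\Omega\cap B(\bzero,R)}\frac{|\bv|^{2}}{|\bx|^{2}}\rd\bx\leq\varepsilon^{-2}C(\Omega)^{2}\Vert\bnabla\bv\Vert_{2}^{2}.
\]
Combined with the exterior estimate this gives the desired Hardy inequality with $C=C(\Omega)$. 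The main subtlety, and the reason the constant depends on $\Omega$ rather than being universal as in $\mathbb{R}^{2}$, is that central symmetry delivers the zero-mean condition only on full circles, which fails in the annular region $\Omega\cap B(\bzero,R)$ whenever $B$ itself is not centrally symmetric; this forces the separate bounded-domain treatment near the obstacle and is the only step introducing geometric dependence.
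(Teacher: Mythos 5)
Your proof is correct, and the overall strategy coincides with the paper's: reduce the bilinear estimate, via H\"older, to a logarithm-free Hardy inequality $\Vert\bv/\left|\bx\right|\Vert_{2}\leq C\Vert\bnabla\bv\Vert_{2}$ for centrally symmetric fields, obtain that inequality from a zero-mean Poincar\'e estimate made available by the symmetry, and handle the region near the obstacle separately using the boundary condition and $\left|\bx\right|\geq\varepsilon$. The implementation in the exterior region is genuinely different, though. The paper decomposes $\Omega$ into scaled copies $S_{n}=nS_{1}$ of a fixed annulus, uses $\int_{S_{n}}\bv=0$ (central symmetry) to apply Poincar\'e on each shell, and invokes the scale invariance of the norms $\Vert\bv/\left|\bx\right|\Vert_{2}$ and $\Vert\bnabla\bv\Vert_{2}$ to get a uniform constant before summing; you instead apply the one-dimensional Wirtinger inequality on each circle $\left|\bx\right|=r>R$, where the antiperiodicity $v_{i}(r,\theta+\pi)=-v_{i}(r,\theta)$ kills the even Fourier modes. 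Your route buys two things: it avoids the shell decomposition and its bookkeeping entirely (the shells $B_{2n}\setminus B_{n}$ as written in the paper overlap with growing multiplicity, so the summation step really wants a dyadic family -- your circle-by-circle argument sidesteps this), and it gives the sharp constant $1$ in the far field, whereas the paper's constant carries a factor $\varepsilon^{-1}$ there. The one point you should make explicit is the usual density step: carry out the circle-wise argument for smooth compactly supported $\bv$ and pass to the limit in $\dot{H}_{0,\sigma}^{1}(\Omega)$, noting that both sides of the Hardy inequality are continuous in the $\dot{H}^{1}$ norm (the left side by Fatou) and that central symmetry survives the approximation.
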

\begin{proof}
It suffices to prove the following Hardy inequality
\begin{equation}
\left\Vert \frac{\bv}{\left|\bx\right|}\right\Vert _{2}\leq C\left\Vert \bnabla\bv\right\Vert _{2}\,,\label{eq:hardy-central}
\end{equation}
for all centrally symmetric $\bv\in\dot{H}_{0,\sigma}^{1}(\Omega)$.
Let $R>0$ be such that $B\subset B(\bzero,R)$. We denote by $B_{n}$
the ball $B_{n}=B(\bzero,nR)$ and by $S_{n}$ the shell
\begin{align*}
S_{0} & =B_{1}\setminus B\,, & S_{n} & =B_{2n}\setminus B_{n}\,,\quad\text{for}\quad n\geq1\,.
\end{align*}
Since $\bv$ is zero on $\partial B$, by using the PoincarÃ© inequality
in $S_{0}$, there exists a constant $C_{0}>0$ such that
\[
\bigl\Vert\bu;L^{2}(S_{0})\bigr\Vert^{2}\leq C_{0}\bigl\Vert\bnabla\bu;L^{2}(S_{0})\bigr\Vert^{2}\,.
\]
Since $\bv$ is centrally symmetric, we have
\[
\int_{\gamma}\bv=0\,,
\]
for $\gamma$ any centrally symmetric smooth curve. Therefore for
$n\geq1$,
\[
\int_{S_{n}}\bv=0\,,
\]
and by using the PoincarÃ© inequality in $S_{n}$, there exists $C_{n}>0$
such that
\[
\bigl\Vert\bv;L^{2}(S_{n})\bigr\Vert^{2}\leq C_{n}\bigl\Vert\bnabla\bv;L^{2}(S_{n})\bigr\Vert^{2}\,.
\]
By hypothesis $\left|\bx\right|\geq\varepsilon$ so we obtain
\[
\bigl\Vert\bv/\left|\bx\right|;L^{2}(S_{n})\bigr\Vert^{2}\leq\frac{C_{n}}{\varepsilon}\bigl\Vert\bnabla\bv;L^{2}(S_{n})\bigr\Vert^{2}\,.
\]
But the domains $S_{n}$ are scaled versions of $S_{1}$, \emph{i.e.}
$S_{n}=nS_{1}$ for $n\geq1$ and therefore, since the two norms in
the previous inequality are scale invariant, we obtain that $C_{n}=C_{1}$,
for $n\geq1$. Now we have for $N\geq1$,
\begin{align*}
\bigl\Vert\bv/\left|\bx\right|;L^{2}(B_{2N}\setminus B)\bigr\Vert^{2} & =\sum_{n=0}^{N}\bigl\Vert\bv/\left|\bx\right|;L^{2}(S_{n})\bigr\Vert^{2}\leq\frac{1}{\varepsilon}\sum_{n=0}^{N}C_{n}\bigl\Vert\bnabla\bu;L^{2}(S_{n})\bigr\Vert^{2}\\
 & \leq\frac{C_{0}+C_{1}}{\varepsilon}\sum_{n=0}^{N}\bigl\Vert\bnabla\bu;L^{2}(S_{n})\bigr\Vert^{2}\leq\frac{C_{0}+C_{1}}{\varepsilon}\bigl\Vert\bnabla\bu;L^{2}(B_{2N}\setminus B)\bigr\Vert^{2}\,.
\end{align*}
Finally, by taking the limit $N\to\infty$, \eqref{hardy-central}
is proven where $C^{2}=\varepsilon^{-1}(C_{0}+C_{1})$ depends only
on $\Omega$.
\end{proof}
Finally, in the next two lemmas, we investigate the validity of \hypref{main}
without symmetry assumptions for the two critically decaying harmonic
functions. We prove that \hypref{main} is verified for the flux carrier
under some conditions but never for the pure rotating solution.
\begin{lem}[{\citealp[Lemma 3]{Russo-existenceofDsolutions2011}}]
\label{lem:flux}For $\Phi\in\mathbb{R}$, if
\[
\bar{\bu}_{\Phi}=\frac{\Phi}{2\pi}\frac{\be_{r}}{r}\,,
\]
then for all $\bv\in\dot{H}_{0,\sigma}^{1}(\Omega)$,
\[
\bigl|\big(\bv\bcdot\bnabla\bv,\bar{\bu}_{\Phi}\big)\big|\le\frac{\left|\Phi\right|}{2\pi}\left\Vert \bnabla\bv\right\Vert _{2}^{2}\,.
\]
\end{lem}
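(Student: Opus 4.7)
\emph{Plan.} The key observation is that the flux carrier is a gradient, $\bar{\bu}_{\Phi} = \bnabla\phi$ with $\phi(\bx) = \frac{\Phi}{2\pi}\log|\bx|$ harmonic on $\Omega$. By density of $C^{\infty}_{0,\sigma}(\Omega)$ in $\dot{H}^{1}_{0,\sigma}(\Omega)$, it suffices to prove the inequality for compactly supported test fields and then pass to the limit. For such $\bv$, integration by parts using $\bnabla\bcdot\bv=0$ (all boundary terms vanish since $\bv|_{\partial\Omega}=\bzero$ and $\bv$ is compactly supported) gives
\[
(\bv\bcdot\bnabla\bv,\bar{\bu}_{\Phi}) = -\int_{\Omega} v_{i}v_{j}\,\partial_{i}\partial_{j}\phi\,\rd\bx = \frac{\Phi}{2\pi}\int_{\Omega}\frac{v_{r}^{2}-v_{\theta}^{2}}{r^{2}}\,\rd\bx,
\]
the second equality from the Hessian computation $\partial_{i}\partial_{j}\phi = \frac{\Phi}{2\pi}(\delta_{ij}/r^{2} - 2x_{i}x_{j}/r^{4})$ together with $\bx\bcdot\bv = rv_{r}$. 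The lemma therefore reduces to the Hardy-type bound
\[
\biggl|\int_{\Omega}\frac{v_{r}^{2}-v_{\theta}^{2}}{r^{2}}\,\rd\bx\biggr| \leq \|\bnabla\bv\|_{2}^{2}.
\]

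I would establish this via the scalar stream function. Extending $\bv$ by zero to $\mathbb{R}^{2}$ preserves divergence-freeness, so there exists $\psi$ with $v_{r} = r^{-1}\partial_{\theta}\psi$ and $v_{\theta} = -\partial_{r}\psi$; Plancherel on $\mathbb{R}^{2}$ then gives $\|\bnabla\bv\|_{2}^{2} = \|\Delta\psi\|_{2}^{2}$. Angular Fourier decomposition $\psi(r,\theta) = \sum_{k\in\mathbb{Z}}\psi_{k}(r)\,e^{\i k\theta}$ decouples the modes, and after several integrations by parts in $r$ to clear cross-terms, the task reduces to the single-mode inequality
\[
\bigl|k^{2}B_{k} - A_{k}\bigr| \leq D_{k} + (1+2k^{2})A_{k} + k^{2}(k^{2}-4)B_{k},
\]
with $A_{k} = \int_{0}^{\infty}|\psi_{k}'|^{2}r^{-1}\,\rd r$, $B_{k} = \int_{0}^{\infty}|\psi_{k}|^{2}r^{-3}\,\rd r$, and $D_{k} = \int_{0}^{\infty}|\psi_{k}''|^{2}r\,\rd r$, summed over $k$.

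The cases $k=0$ and $|k|\geq 3$ are immediate from the favourable signs of the coefficients on the right-hand side together with the triangle inequality. For $|k|\in\{1,2\}$, one needs the weighted one-dimensional Hardy bound $A_{k}\geq B_{k}$, which I would obtain by writing $\psi_{k}(r)=r\chi(r)$ and computing
\[
A_{k}-B_{k} = \int_{0}^{\infty} r\,|\chi'(r)|^{2}\,\rd r \geq 0
\]
through a single integration by parts (boundary terms vanish because $\bv$ is compactly supported in $\Omega$, which excludes a neighbourhood of the origin, and $\bv$ has compact support at infinity). The main obstacle is the mode $|k|=1$: here the coefficient $k^{2}(k^{2}-4) = -3$ is negative and the resulting inequality is tight, with equality approached for $\psi_{k}(r)\propto r$, so the Hardy bound must be combined delicately with the $D_{k}$ contribution rather than by the brute triangle inequality. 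Once the mode-by-mode bounds are in hand, summation in $k$ and the density argument finish the proof.
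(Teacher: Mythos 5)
Your core computation is correct, but it takes a genuinely different route from the paper, which does not prove the explicit constant at all: the paper writes $\bar{\bu}_{\Phi}=\frac{\Phi}{2\pi}\bnabla\log r$, integrates by parts once to pair $\bnabla\bcdot(\bv\bcdot\bnabla\bv)=\bnabla\bv:(\bnabla\bv)^{T}$ with $\log r$, invokes the Coifman--Lions--Meyer--Semmes compensated compactness theorem ($\mathcal{H}^{1}$--$\BMO$ duality) to conclude that the trilinear form is continuous on $\dot{H}_{0,\sigma}^{1}(\Omega)$, and then simply cites Russo (reproduced in Galdi, Remark X.4.2) for the value $\frac{\left|\Phi\right|}{2\pi}$ of the constant. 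Your argument is essentially a reconstruction of that cited computation: the identity $(\bv\bcdot\bnabla\bv,\bar{\bu}_{\Phi})=\frac{\Phi}{2\pi}\int_{\Omega}(v_{r}^{2}-v_{\theta}^{2})r^{-2}$ is correct for compactly supported $\bv$, the per-mode expansion $\|\Delta\psi\|_{2}^{2}=2\pi\sum_{k}\bigl(D_{k}+(1+2k^{2})A_{k}+k^{2}(k^{2}-4)B_{k}\bigr)$ checks out, and with the weighted Hardy bound $A_{k}\geq B_{k}$ every mode closes. (The mode $|k|=1$ is in fact less delicate than you suggest: the required inequality reads $A_{1}-B_{1}\leq D_{1}+3(A_{1}-B_{1})$, which is immediate from $A_{1}\geq B_{1}$ and $D_{1}\geq0$; no interplay with $D_{1}$ is needed.) Your route buys a self-contained proof of the sharp constant; the paper's buys brevity plus the continuity statement, which is where your write-up has a real gap.

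The gap is the closing step ``pass to the limit by density.'' For a general $\bv\in\dot{H}_{0,\sigma}^{1}(\Omega)$ the quantity $\int_{\Omega}|v_{r}^{2}-v_{\theta}^{2}|\,r^{-2}$ can be infinite --- the two-dimensional Hardy inequality fails without a logarithmic weight, and mode-one fields of the form $\bv=\bnabla\bwedge(r\chi_{R}(r)\cos\theta)$ with $\chi_{R}$ a logarithmic plateau function make the ratio $\int_{\Omega}|\bv|^{2}r^{-2}/\left\Vert \bnabla\bv\right\Vert _{2}^{2}$ arbitrarily large --- so on the completion the left-hand side is not defined as a Lebesgue integral, and the inequality does not transfer automatically from the dense subspace. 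This is precisely what the paper's appeal to $\mathcal{H}^{1}$--$\BMO$ duality is for. You can also repair it elementarily: polarizing the quadratic bound on $C_{0,\sigma}^{\infty}(\Omega)$ gives $|Q(\bv)-Q(\bw)|\leq\frac{\left|\Phi\right|}{2\pi}\bigl(\|\bnabla(\bv-\bw)\|_{2}^{2}+2\|\bnabla(\bv-\bw)\|_{2}\|\bnabla\bw\|_{2}\bigr)$ for $Q(\bv)=(\bv\bcdot\bnabla\bv,\bar{\bu}_{\Phi})$, so $Q$ is uniformly continuous on bounded sets and extends to $\dot{H}_{0,\sigma}^{1}(\Omega)$ together with the bound. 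Either way, a sentence defining the form off the dense subspace must be added before the density argument is legitimate.
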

\begin{proof}
We have
\[
\bar{\bu}_{\Phi,0,0}=\frac{\Phi}{2\pi}\bnabla\log r\,,
\]
so by integrating by part, we have
\[
\big(\bv\bcdot\bnabla\bv,\bar{\bu}_{\Phi,0,0}\big)=\frac{\Phi}{2\pi}\big(\bnabla\bcdot\left(\bv\bcdot\bnabla\bv\right),\log r\big)\,.
\]
By using \citet[Theorem II.1]{Coifman-Compensatedcompactness1993},
$\bnabla\bcdot\left(\bv\bcdot\bnabla\bv\right)=\bnabla\bv:\left(\bnabla\bv\right)^{T}$
is in the Hardy space $\mathcal{H}^{1}(\Omega)$, and since $\log r\in\BMO(\Omega)$,
we obtain that $\big(\bv\bcdot\bnabla\bv,\bar{\bu}_{\Phi}\big)$ is
a continuous bilinear form on $\dot{H}_{0,\sigma}^{1}(\Omega)$. An
explicit calculation performed by \citet[Lemma 3]{Russo-existenceofDsolutions2011}
and reproduced in \citet[Remark X.4.2]{Galdi-IntroductiontoMathematical2011}
determines the value of the constant,
\[
\big|\big(\bv\bcdot\bnabla\bv,\bar{\bu}_{\Phi,0,0}\big)\big|\leq\frac{\left|\Phi\right|}{2\pi}\left\Vert \bnabla\bv\right\Vert _{2}^{2}\,.
\]

\end{proof}
The following lemma is given as an example that not all critically
decaying steady solutions satisfy \hypref{main}, but can be skipped
as it will not be used later on.
\begin{lem}
For $\mu\neq0$, if
\[
\bar{\bu}_{0,\mu,0}=\frac{\mu}{2\pi}\frac{\be_{\theta}}{r}\,,
\]
then for any $\delta>0$, there exists $\bv\in H_{0,\sigma}^{1}(\Omega)$
such that
\[
\big(\bv\bcdot\bnabla\bv,\bar{\bu}_{0,\mu,0}\big)\geq\delta\left\Vert \bnabla\bv\right\Vert _{2}^{2}\,.
\]
\end{lem}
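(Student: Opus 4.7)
My plan is to first rewrite the trilinear form as a clean polar-coordinate integral by integration by parts, and then to construct a spiralling family of test fields whose ratio diverges by exploiting the logarithmic failure of the two-dimensional Hardy inequality.

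Since $\bv\in H_{0,\sigma}^{1}(\Omega)$ is divergence-free and vanishes on $\partial\Omega$, one integration by parts gives $\bigl(\bv\bcdot\bnabla\bv,\bar{\bu}_{0,\mu,0}\bigr)=-\int_{\Omega}(\bv\otimes\bv):\bnabla\bar{\bu}_{0,\mu,0}\,\rd x$. Writing $\bar{\bu}_{0,\mu,0}=\tfrac{\mu}{2\pi r^{2}}(-y,x)$, computing its Jacobian in Cartesian coordinates, and re-expressing the resulting quadratic form in the polar components $v_{r},v_{\theta}$ of $\bv$, all the angular terms collapse and leave
\[\bigl(\bv\bcdot\bnabla\bv,\bar{\bu}_{0,\mu,0}\bigr)=\frac{\mu}{\pi}\int_{\Omega}\frac{v_{r}v_{\theta}}{r^{2}}\,\rd x\,.\]
Both sides are invariant under the dilation $\bv(\bx)\mapsto\bv(\lambda\bx)$, so no straightforward rescaling can produce an unbounded quotient; the test fields must carry a carefully tuned non-homogeneous radial profile.

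Concretely, I take $\bv_{\omega,R}=\bnabla^{\perp}\psi$ with the spiralling stream function $\psi(r,\theta)=r\cos(\theta-\omega\log r)\,\chi_{R}(r)$, where $\chi_{R}$ is a smooth cutoff equal to $1$ on $[2\varepsilon,R]$ and supported in $[\varepsilon,2R]$, and $\omega$ is a small real parameter. Since $r\cos\theta=x$ and $r\sin\theta=y$ are both harmonic, the prefactor $r$ places $\psi$ in the approximate kernel of the Laplacian on the first angular mode and keeps $\|\bnabla\bv_{\omega,R}\|_{2}=\|\Delta\psi\|_{2}$ small when $\omega$ is small, while the $\omega\log r$ twist makes $v_{r}v_{\theta}$ keep a consistent sign throughout the annulus $\{2\varepsilon<r<R\}$. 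Passing to the logarithmic radial variable $s=\log r$ with $\alpha(s)=\cos(\omega s)\chi_{R}(e^{s})$ and $\beta(s)=\sin(\omega s)\chi_{R}(e^{s})$, both integrals reduce to one-dimensional quantities:
\[\bigl(\bv\bcdot\bnabla\bv,\bar{\bu}_{0,\mu,0}\bigr)=\mu\int(\alpha\beta'-\beta\alpha')\,\rd s\,,\qquad\|\bnabla\bv\|_{2}^{2}=\pi\int\bigl[(\alpha''+2\alpha')^{2}+(\beta''+2\beta')^{2}\bigr]\,\rd s\,.\]

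On the bulk where $\chi_{R}\equiv1$ a direct computation gives $\alpha\beta'-\beta\alpha'\equiv\omega$ and $(\alpha''+2\alpha')^{2}+(\beta''+2\beta')^{2}\equiv\omega^{4}+4\omega^{2}$, while the two cutoff regions (each of $s$-width $O(1)$) contribute only $O(\omega)$ to the numerator and $O(1)$ to the denominator. Writing $S=\log(R/(2\varepsilon))$ one therefore has
\[\bigl(\bv_{\omega,R}\bcdot\bnabla\bv_{\omega,R},\bar{\bu}_{0,\mu,0}\bigr)=\mu\omega S(1+o(1))\,,\qquad\|\bnabla\bv_{\omega,R}\|_{2}^{2}=\pi(\omega^{4}+4\omega^{2})S+O(1)\,,\]
and the calibration $\omega=\operatorname{sign}(\mu)/\sqrt{S}$ makes the bulk and cutoff contributions to the denominator of the same order, producing a ratio of order $|\mu|\sqrt{S}$ which exceeds any prescribed $\delta$ once $R$ is taken large enough. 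The main obstacle is exactly this balancing act: letting $\omega\to0$ at fixed $R$ forces the $O(1)$ cutoff penalty to dominate, whereas fixing $\omega$ and letting $R\to\infty$ caps the quotient at $|\mu|/(4\pi\omega)$; the intermediate scale $\omega\sim1/\sqrt{\log R}$ is precisely the regime where the two-dimensional Hardy inequality fails to hold without its logarithmic correction, and reaching it relies on verifying that the cutoff contributions are genuinely bounded rather than growing with an inverse power of $\omega$.
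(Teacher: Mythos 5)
Your construction is correct and rests on the same mechanism as the paper's own proof: a spiralling first-harmonic stream function whose twist parameter tends to zero, exploiting the logarithmic divergence that defeats the scale-invariant Hardy inequality in two dimensions. I checked the key identities: the integration by parts giving $\bigl(\bv\bcdot\bnabla\bv,\bar{\bu}_{0,\mu,0}\bigr)=\frac{\mu}{\pi}\int_{\Omega}r^{-2}v_{r}v_{\theta}$, the reduction to the logarithmic variable $s=\log r$, and the bulk values $\omega$ and $\omega^{4}+4\omega^{2}$ are all right, and the cutoff contributions are indeed $O(1)$ uniformly in $\omega$ and $R$, so the calibration $\omega\sim(\log R)^{-1/2}$ (or simply a fixed small $\omega$ followed by $R\to\infty$) does give a ratio exceeding any $\delta$. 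The difference from the paper is technical rather than conceptual: the paper takes $\psi_{\alpha}=r^{\cos\alpha}\cos(\theta-\sin\alpha\log r)$, whose slightly subcritical radial power makes $\left\Vert \bnabla\bu_{\alpha}\right\Vert _{2}$ finite on $r>1$ without any outer truncation, and then tames the slowly decaying tail with a $\log\log$ cutoff at $k_{\alpha}=2^{1/(2(1-\cos\alpha))}$, i.e. $\alpha\sim(\log k_{\alpha})^{-1/2}$ --- which is exactly your calibration in disguise. Your version keeps the critical power $r^{1}$ and pays for it with an explicit outer cutoff, which makes both one-dimensional integrals exactly proportional to $S$ and renders the error analysis somewhat cleaner. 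One small fix is needed: your inner cutoff transition on $[\varepsilon,2\varepsilon]$ does not guarantee that $\bv$ vanishes on a neighborhood of the obstacle, since $B$ is only known to contain $B(\bzero,\varepsilon)$ and may extend out to a radius $R_{0}$ with $\bar{B}\subset B(\bzero,R_{0})$; place the inner transition on $[R_{0},2R_{0}]$ instead, as the paper does with its cutoff $\chi$. This changes nothing in the estimates, since that region still contributes $O(1)$ to both integrals.
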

\begin{proof}
Without loss of generality, we assume $\mu=2\pi$. For $\alpha>0$
small, we define the following stream function and the corresponding
velocity field,
\begin{align*}
\psi_{\alpha} & =r^{\cos\alpha}\cos(\theta-\sin\alpha\,\log r)\,, & \bu_{\alpha} & =\bnabla\bwedge\psi_{\alpha}\,.
\end{align*}
Then an explicit calculation, shows that
\begin{equation}
\int_{\mathbb{R}^{2}\setminus B(\bzero,1)}\left|\bnabla\bu_{\alpha}\right|^{2}=4\pi\,.\label{eq:ualpha-int1}
\end{equation}
On the other hand, we have
\[
\bu_{\alpha}\bcdot\bnabla\bu_{\alpha}\bcdot\bar{\bu}_{0,\mu,0}=\frac{\sin2\alpha}{2}r^{-4+2\cos\alpha}\,,
\]
so
\begin{equation}
\int_{\mathbb{R}^{2}\setminus B(\bzero,1)}\left(\bu_{\alpha}\bcdot\bnabla\bu_{\alpha}\bcdot\bar{\bu}_{0,\mu,0}\right)=\frac{\pi\sin2\alpha}{2-2\cos\alpha}\,.\label{eq:ualpha-int2}
\end{equation}
Since $\bigl\Vert\bu_{\alpha}\bigr\Vert_{\infty}$ and $\bigl\Vert\left|\bx\right|^{-1}\psi_{\alpha}\bigr\Vert_{\infty}$
are uniformly bounded in $\alpha$, if $\bu_{\alpha}$ were in $H_{0,\sigma}^{1}(\Omega)$,
the lemma would be proven by taking $\alpha\to0$. In the following,
we will make a suitable cutoff of $\bu_{\alpha}$ to make its support
compact in $\Omega$.

Let $\chi:\Omega\to[0,1]$ be a smooth cutoff function such that $\chi=0$
on $\Omega\cap\bar{B}(\bzero,R)$ and $\chi=1$ on $\Omega\setminus B(\bzero,2R)$.
Let $\eta:\mathbb{R}\to[0,1]$ be a smooth cutoff function such that
$\eta(r)=1$ if $r\leq1$ and $\eta(r)=0$ if $r\geq2$. For $k>0$
large enough, let $\eta_{k}$ be defined by
\[
\eta_{k}(\bx)=\chi(\bx)\,\eta\left(\frac{\log\log\left|\bx\right|}{\log\log k}\right)\,.
\]
Therefore, $\eta_{k}$ has support in $B(\bzero,K)\setminus\bar{B}(\bzero,R)$
where $K=\e^{\log^{2}k}$ and $\eta_{k}(\bx)=1$ if $2R\leq\left|\bx\right|\leq k$.
Moreover the decay at infinity is the following,
\begin{equation}
\bigl|\bnabla\eta_{k}(\bx)\bigr|+\left|\bx\right|\bigl|\bnabla^{2}\eta_{k}(\bx)\bigr|\leq\frac{C_{\eta}}{\log\log k}\frac{1}{\left|\bx\right|\log\left|\bx\right|}\,,\label{eq:cutoff-eta}
\end{equation}
where $C_{\eta}>0$ is independent of $k$. We now define
\[
\bu_{\alpha,k}=\bnabla\bwedge\left(\eta_{k}\psi_{\alpha}\right)=\eta_{k}\bu_{\alpha}+\psi_{\alpha}\left(\bnabla\bwedge\eta_{k}\right)\,,
\]
so
\[
\bnabla\bu_{\alpha,k}=\eta_{k}\bnabla\bu_{\alpha}+\bnabla\eta_{k}\otimes\bu_{\alpha}+\bu_{\alpha}^{\perp}\otimes\left(\bnabla\bwedge\eta_{k}\right)+\psi_{\alpha}\bnabla\left(\bnabla\bwedge\eta_{k}\right)\,.
\]
Since $\bigl\Vert\bu_{\alpha}\bigr\Vert_{\infty}$ and $\bigl\Vert\left|\bx\right|^{-1}\psi_{\alpha}\bigr\Vert_{\infty}$
are uniformly bounded in $\alpha$, in view of \eqref{cutoff-eta}
and \eqref{ualpha-int1}, we have 
\begin{equation}
\left\Vert \bnabla\bu_{\alpha,k}\right\Vert _{2}\leq\left\Vert \eta_{k}\right\Vert _{\infty}\left\Vert \bnabla\bu_{\alpha}\right\Vert _{2}+\left\Vert \bu_{\alpha}\right\Vert _{\infty}\left\Vert \bnabla\eta_{k}\right\Vert _{2}+\bigl\Vert\left|\bx\right|^{-1}\psi_{\alpha}\bigr\Vert_{\infty}\bigl\Vert\left|\bx\right|\bnabla^{2}\eta_{k}\bigr\Vert_{2}\leq C\,,\label{eq:bound-grad-u-alpha-k}
\end{equation}
for some $C_{\bu}>0$ independent of $\alpha$ and $k$.

To analyze the other term, we split the domain $\Omega$ into $\Omega_{1}=\Omega\cap B(\bzero,2R)$
and $\Omega_{2}=\mathbb{R}^{2}\setminus B(\bzero,2R)$ and define
\[
I_{i}(\alpha,k)=\int_{\Omega_{i}}\big(\bu_{\alpha,k}\bcdot\bnabla\bu_{\alpha,k}\bcdot\bar{\bu}_{0,\mu,0}\big)\,,
\]
for $i=1,2$. The functions are independent of $k$ in $\Omega_{1}$,
so there exists a constant $C_{1}>0$ independent of $\alpha$ and
$k$ such that
\[
\left|I_{1}(\alpha,k)\right|\leq C_{1}\,.
\]
In $\Omega_{2}$, the cutoff-function $\eta_{k}$ is radially symmetric
and an explicit calculation shows that
\begin{align*}
I_{2}(\alpha,k) & =2\pi\int_{2R}^{\infty}r^{-3+2\cos\alpha}\left(\sin2\alpha\,\eta_{k}^{2}+\sin\alpha\,\eta_{k}\, r\bnabla\eta_{k}\bcdot\be_{r}\right)\rd r\\
 & \geq2\pi\sin2\alpha\,\int_{2R}^{k}r^{-3+2\cos\alpha}\rd r-\frac{2\pi C_{\eta}\sin\alpha}{\log\log k}\left|\int_{k}^{K}r^{-3+2\cos\alpha}\frac{\rd r}{\log r}\right|\,,
\end{align*}
where we used \eqref{cutoff-eta} for the last step. We have
\[
\left|\int_{k}^{K}r^{-3+2\cos\alpha}\frac{\rd r}{\log r}\right|\leq\left|\int_{\e}^{K}\frac{\rd r}{r\log r}\right|\leq\log\log K\leq2\log\log k\,,
\]
and moreover, by choosing
\begin{align}
k_{\alpha} & =2^{\frac{1}{2(1-\cos\alpha)}}\,,\label{eq:def-k-alpha}
\end{align}
we have
\[
\int_{1}^{k_{\alpha}}r^{-3+2\cos\alpha}\rd r=\frac{1}{4-4\cos\alpha}\,.
\]
Therefore, the exists $C_{2}>0$ independent of $\alpha$, such that
\[
I_{2}(\alpha,k_{\alpha})\geq\frac{\pi\sin2\alpha}{2-2\cos\alpha}-C_{2}\,.
\]
Defining $\bv_{\alpha}=\bu_{\alpha,k_{\alpha}}$, we have
\[
\big(\bv_{\alpha}\bcdot\bnabla\bv_{\alpha},\bar{\bu}_{0,\mu,0}\big)=I_{1}(\alpha,k_{\alpha})+I_{2}(\alpha,k_{\alpha})\geq\frac{\pi\sin2\alpha}{2-2\cos\alpha}-C_{1}-C_{2}\,,
\]
and therefore the lemma is proven by taking the limit $\alpha\to0^{+}$
since $\left\Vert \bnabla\bv_{\alpha}\right\Vert _{2}\leq C$ by \eqref{bound-grad-u-alpha-k}.
\end{proof}
All the previous lemmas, lead to the following corollaries of \thmref{stability}:
\begin{cor}
Let assume that $\bar{\bu}=\bar{\bu}_{\Phi}+\bar{\bu}_{\delta}\in\dot{H}_{\sigma}^{1}(\Omega)$,
where $\bar{\bu}_{\delta}$ satisfies \eqref{subcritical} for some
$\delta>0$, is a steady solution of \eqref{ns-ubar}. If $\left|\Phi\right|+2\pi\delta<2\pi$,
then $\bar{\bu}$ is asymptotically stable with respect to any $L^{2}$-perturbation.\end{cor}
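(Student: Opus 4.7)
The corollary is a direct consequence of the two bounds established earlier combined with the main stability theorem, so the strategy is to verify \hypref{main} for $\bar{\bu}=\bar{\bu}_{\Phi}+\bar{\bu}_{\delta}$ and then invoke \thmref{stability}.

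The plan is as follows. First, I would fix an arbitrary $\bv\in\dot{H}_{0,\sigma}^{1}(\Omega)$ (viewed as extended by zero to $\mathbb{R}^{2}$ so that we land in the space used in \hypref{main}) and use bilinearity of the pairing $(\bv\bcdot\bnabla\bv,\cdot)$ to split
\[
\bigl(\bv\bcdot\bnabla\bv,\bar{\bu}\bigr)=\bigl(\bv\bcdot\bnabla\bv,\bar{\bu}_{\Phi}\bigr)+\bigl(\bv\bcdot\bnabla\bv,\bar{\bu}_{\delta}\bigr).
\]
Then \lemref{flux} applied to the first term gives $\bigl|(\bv\bcdot\bnabla\bv,\bar{\bu}_{\Phi})\bigr|\leq\frac{|\Phi|}{2\pi}\left\Vert \bnabla\bv\right\Vert _{2}^{2}$, while the pointwise bound \eqref{subcritical} together with \lemref{hardy} gives $\bigl|(\bv\bcdot\bnabla\bv,\bar{\bu}_{\delta})\bigr|\leq\delta\left\Vert \bnabla\bv\right\Vert _{2}^{2}$.

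Combining these estimates via the triangle inequality yields
\[
\bigl|\bigl(\bv\bcdot\bnabla\bv,\bar{\bu}\bigr)\bigr|\leq\left(\tfrac{|\Phi|}{2\pi}+\delta\right)\left\Vert \bnabla\bv\right\Vert _{2}^{2}.
\]
Setting $\delta'=\tfrac{|\Phi|}{2\pi}+\delta$, the smallness hypothesis $|\Phi|+2\pi\delta<2\pi$ is exactly $\delta'<1$, so $\delta'\in[0,1)$ and \hypref{main} holds for $\bar{\bu}$ with constant $\delta'$. Applying \thmref{stability} to any weak solution $\bv$ of \eqref{ns-v} produced by \thmref{existence} (for the perturbation $\bv_{0}=\bu_{0}-\bar{\bu}\in L_{\sigma}^{2}(\Omega)$) then yields $\left\Vert \bv(t)\right\Vert _{2}\to0$ as $t\to\infty$, which is the asymptotic $L^{2}$-stability claimed.

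There is essentially no obstacle here: the corollary is a bookkeeping exercise that packages the two critical-part and subcritical-part estimates into a single sufficient condition on the splitting. The only point that requires a moment's care is the compatibility between the ambient space $\dot{H}_{0,\sigma}^{1}(\mathbb{R}^{2})$ appearing in \hypref{main} and the space $\dot{H}_{0,\sigma}^{1}(\Omega)$ in which \lemref{flux} and \lemref{hardy} are stated, but this is handled by extension by zero since elements of the latter vanish on $\partial\Omega$.
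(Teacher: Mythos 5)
Your proof is correct and follows essentially the same route as the paper, which simply invokes \lemref{hardy} and \lemref{flux} to verify \hypref{main} and then applies \thmref{stability}; your version just spells out the bilinear splitting and the identification $\delta'=\tfrac{|\Phi|}{2\pi}+\delta<1$ explicitly. The remark about reconciling $\dot{H}_{0,\sigma}^{1}(\Omega)$ with $\dot{H}_{0,\sigma}^{1}(\mathbb{R}^{2})$ by zero extension is a reasonable point of care that the paper leaves implicit.
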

\begin{proof}
By \lemref{hardy,flux}, \hypref{main} is satisfied, so \thmref{stability}
holds.\end{proof}
\begin{cor}
Let assume that the domain $\Omega$ is invariant under the central
symmetry $\bx\mapsto-\bx$, and that $\bar{\bu}\in\dot{H}_{\sigma}^{1}(\Omega)$
is a centrally symmetric steady solution of \eqref{ns-ubar} satisfying
\eqref{critical} for some $\delta>0$. If $\delta<1$, then $\bar{\bu}$
is asymptotically stable with respect to any centrally symmetric $L^{2}$-perturbation.\end{cor}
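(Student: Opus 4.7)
The plan is to mimic the proof of \thmref{stability} inside the closed subspace of centrally symmetric vector fields rather than appealing to that theorem as a black box. The obstruction to a direct citation is that \lemref{hardy-central} yields only the analogue of \hypref{main} for centrally symmetric test functions, and there is no reason the full \hypref{main} should hold. So I would first combine \eqref{critical} with \lemref{hardy-central} to obtain
\[
\bigl|\bigl(\bv\bcdot\bnabla\bv,\bar{\bu}\bigr)\bigr|\le\delta\left\Vert \bnabla\bv\right\Vert _{2}^{2}
\]
for every centrally symmetric $\bv\in\dot{H}_{0,\sigma}^{1}(\Omega)$, which is exactly what the energy method requires once the analysis is confined to the centrally symmetric subspace.

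The second step is to verify that central symmetry is preserved by the evolution \eqref{ns-v}. The map $\bv(\bx)\mapsto-\bv(-\bx)$ is a symmetry of \eqref{ns-v} because $\Omega$ is invariant under $\bx\mapsto-\bx$ and $\bar{\bu}$ is centrally symmetric, so each of the terms $\bar{\bu}\bcdot\bnabla\bv$, $\bv\bcdot\bnabla\bar{\bu}$, and $\bv\bcdot\bnabla\bv$ transforms correctly. At the level of Galerkin approximations, one selects a basis of $C_{0,\sigma}^{\infty}(\Omega)$ consisting of centrally symmetric elements, which one obtains by antisymmetrizing an arbitrary basis via $\bv(\bx)\mapsto\tfrac12\bigl(\bv(\bx)-\bv(-\bx)\bigr)$. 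The approximate solutions then lie in the centrally symmetric subspace for all times, and so does their weak limit; the weak formulation \eqref{def-weak-solution} can accordingly be tested against centrally symmetric $\bphi$ only.

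With the framework set up in the centrally symmetric subspace, I would re-run the proofs of \thmref{existence} and \thmref{stability}. Since the bound above is only applied to the centrally symmetric solution $\bv$ or to centrally symmetric test functions, the energy inequality \eqref{energy-inequality} and the stability conclusion both carry over, giving $\left\Vert \bv(t)\right\Vert _{2}\to0$ for every centrally symmetric initial datum $\bv_{0}\in L_{\sigma}^{2}(\Omega)$.

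The main obstacle is the administrative check that every auxiliary device used inside \thmref{stability}, notably \lemref{limit-zero} together with the cutoffs and mollifications that feed into it, is compatible with the central symmetry, meaning that all intermediate functions appearing in the argument can be taken in the centrally symmetric subspace. I expect this to be straightforward because the cutoffs natural for an exterior domain are radial and the standard mollifiers are rotationally symmetric, so each commutes with the reflection $\bx\mapsto-\bx$; but this step does have to be verified line by line rather than quoted.
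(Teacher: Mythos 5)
Your proposal is correct and follows essentially the same route as the paper, which simply notes that the equations are invariant under the central symmetry so that all function spaces can be restricted to the centrally symmetric subspace, and then invokes \lemref{hardy-central} and \thmref{stability} there. Your additional checks (symmetric Galerkin basis, symmetry of $\bpsi(\tau)=\e^{-(s-\tau)\mathcal{L}^{*}}\bphi$) are exactly the details the paper leaves implicit, and \lemref{limit-zero} needs no verification since it concerns only scalar functions of time.
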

\begin{proof}
The Navier-Stokes equations \eqref{ns-u,ns-ubar} are invariant under
the central symmetry and therefore, all the spaces can be restricted
to centrally symmetric solutions. Then the result follows by applying
\lemref{hardy-central} and \thmref{stability}.
\end{proof}

\section{Existence of a weak solution}

The proof of the existence of weak solutions follows the standard
method for showing the existence of weak solutions, see for example
\citet[Theorem 3.1]{Temam-Navier-StokesEquations1977}, so only the
main steps and sketched below.
\begin{proof}[Proof of \thmref{existence}]
Let $\left\{ \bphi_{i}\right\} _{i\geq1}\subset C_{0,\sigma}^{\infty}(\Omega)$
be a sequence which is dense and orthonormal in $H_{0,\sigma}^{1}(\Omega)$.
For each $n$, we define an approximate solution $\bv_{n}$ with initial
data,
\begin{align*}
\bv_{n} & =\sum_{i=1}^{n}\xi_{in}\bphi_{i}\,, & \bv_{n}(0) & =\sum_{i=1}^{n}\left(\bv_{0},\bphi_{i}\right)\bphi_{i}\,,
\end{align*}
which satisfies
\begin{equation}
\left(\partial_{t}\bv_{n},\bphi_{i}\right)+\left(\bnabla\bv_{n},\bnabla\bphi_{i}\right)+\left(\bar{\bu}\bcdot\bnabla\bv_{n},\bphi_{i}\right)+\left(\bv_{n}\bcdot\bnabla\bar{\bu},\bphi_{i}\right)+\left(\bv_{n}\bcdot\bnabla\bv_{n},\bphi_{i}\right)=0\,,\label{eq:approximate-solution}
\end{equation}
for all $i\in\{0,\dots,n\}$. We now obtain the a priori estimates.
By multiplying \eqref{approximate-solution} by $\xi_{in}$ and summing
over $i$,
\[
\left(\partial_{t}\bv_{n},\bv_{n}\right)+\left(\bnabla\bv_{n},\bnabla\bv_{n}\right)+\left(\bv_{n}\bcdot\bnabla\bar{\bu},\bv_{n}\right)=0\,.
\]
Therefore by using \hypref{main}, we have
\begin{equation}
\frac{1}{2}\frac{\rd}{\rd t}\left\Vert \bv_{n}\right\Vert _{2}^{2}+\left(1-\delta\right)\left\Vert \bnabla\bv_{n}\right\Vert _{2}^{2}\leq0\,.\label{eq:energy-inequality-approximate}
\end{equation}
Let $i\geq1$ and $\Omega_{i}$ denotes the support of $\bphi_{i}$.
Therefore the sequence $\left\{ \bv_{n}\right\} _{n\geq1}$ is bounded
in $X_{T}$, so there exists a sub-sequence, also denoted by $\left\{ \bv_{n}\right\} _{n\geq1}$,
which converges to $\bv\in X_{T}$ weakly in $L^{2}(0,T;\dot{H}_{0,\sigma}^{1}(\Omega_{i}))$
and strongly in $L^{2}(0,T;L^{2}(\Omega_{i}))$. Multiplying \eqref{approximate-solution}
by $\psi\in C^{1}([0,T])$, we obtain
\[
-\left(\bv_{n},\dot{\psi}\bphi_{i}\right)+\left(\bnabla\bv_{n},\bnabla\psi\bphi_{i}\right)+\left(\bar{\bu}\bcdot\bnabla\bv_{n},\psi\bphi_{i}\right)+\left(\bv_{n}\bcdot\bnabla\bar{\bu},\psi\bphi_{i}\right)+\left(\bv_{n}\bcdot\bnabla\bv_{n},\psi\bphi_{i}\right)=0\,.
\]
By integrating by parts, the third and the last term, we can pass
to the limit and obtain the existence of a weak solution $\bv$ in
$X_{T}$. By integrating the energy inequality for the approximate
solution \eqref{energy-inequality-approximate} and passing to the
limit, we obtain \eqref{energy-inequality}.
\end{proof}

\section{Linearized system}

In this section, we study the linear system
\begin{equation}
\begin{aligned}\partial_{t}\bv+\bar{\bu}\bcdot\bnabla\bv+\bv\bcdot\bnabla\bar{\bu} & =\Delta\bv-\bnabla q\,, & \bnabla\bcdot\bv & =0\,,\\
\bv\big|_{t=0} & =\bv_{0}\,, & \bv\big|_{\partial\Omega} & =\bzero\,.
\end{aligned}
\label{eq:ns-linear}
\end{equation}
Throughout this section we always assume that \hypref{main} holds.
If $\mathbb{P}$ denotes the Leray projection, we define the following
operator
\[
\mathcal{L}\bv=-\Delta\bv+\mathbb{P}\left(\bar{\bu}\bcdot\bnabla\bv\right)+\mathbb{P}\left(\bv\bcdot\bnabla\bar{\bu}\right)\,,
\]
and its adjoint on $L_{\sigma}^{2}(\Omega)$,
\[
\mathcal{L}^{*}\bv=-\Delta\bv-\bar{\bu}\bcdot\bnabla\bv+(\bnabla\bar{\bu})^{T}\bv\,.
\]
More precisely they are defined by the following bilinear forms on
$H_{0,\sigma}^{1}(\Omega)$,
\begin{align*}
a_{\mathcal{L}}(\bv,\bphi) & =\big(\bnabla\bv,\bnabla\bphi\bigr)+\bigl(\bar{\bu}\bcdot\bnabla\bv,\bphi\bigr)+\bigl(\bv\bcdot\bnabla\bar{\bu},\bphi\bigr)\,,\\
a_{\mathcal{L}^{*}}(\bv,\bphi) & =\big(\bnabla\bv,\bnabla\bphi\bigr)-\bigl(\bar{\bu}\bcdot\bnabla\bv,\bphi\bigr)+\bigl(\bphi\bcdot\bnabla\bar{\bu},\bv\bigr)\,.
\end{align*}
We show that $-\mathcal{L}$ and $-\mathcal{L}^{*}$ generate strongly
continuous semigroups on $L_{\sigma}^{2}(\Omega)$ and some standard
corollaries:
\begin{prop}
The closure in $L_{\sigma}^{2}(\Omega)$ of the operators $\mathcal{L}$
and $\mathcal{L}^{*}$ are infinitesimal generators of analytic semigroups
on $L_{\sigma}^{2}(\Omega)$.\end{prop}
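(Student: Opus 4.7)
The plan is to identify the closures of $\mathcal{L}$ and $\mathcal{L}^{*}$ with the m-sectorial operators associated, via Kato's first representation theorem, to the sesquilinear forms $a_{\mathcal{L}}$ and $a_{\mathcal{L}^{*}}$ defined in the excerpt on the Hilbert pair $V:=H_{0,\sigma}^{1}(\Omega)\subset H:=L_{\sigma}^{2}(\Omega)$. Since every m-sectorial operator on a Hilbert space generates an analytic semigroup, the entire proof reduces to checking continuity of the forms on $V\times V$ and $V$-coercivity. Coercivity is the distinctive analytic input of this paper (essentially the content of Hypothesis~1), whereas continuity is a technical issue specific to the two-dimensional exterior setting.

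For coercivity, I would exploit the divergence-free property of $\bv$ and $\bar{\bu}$ together with the vanishing boundary condition to show, via integration by parts on $C_{0,\sigma}^{\infty}(\Omega)$ extended by density, that $(\bar{\bu}\bcdot\bnabla\bv,\bv)=0$ and $(\bv\bcdot\bnabla\bar{\bu},\bv)=-(\bv\bcdot\bnabla\bv,\bar{\bu})$. Hypothesis~1, applied separately to the real and imaginary parts of $\bv$, then gives
\[
\operatorname{Re}a_{\mathcal{L}}(\bv,\bv)=\|\bnabla\bv\|_{2}^{2}-\operatorname{Re}(\bv\bcdot\bnabla\bv,\bar{\bu})\geq(1-\delta)\|\bnabla\bv\|_{2}^{2},
\]
and adding $(1-\delta)\|\bv\|_{H}^{2}$ to both sides yields $V$-coercivity with constant $1-\delta>0$. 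For continuity on $V\times V$, the Dirichlet part is immediate; for the convective pieces, I would integrate by parts once more to transfer every derivative onto the test functions, obtaining integrals of the form $\int\bar{\bu}\bcdot(\bv\otimes\bnabla\bphi)$, which I would estimate by Hölder's inequality combined with the two-dimensional Gagliardo–Nirenberg interpolation $\|\bw\|_{4}\leq C\|\bw\|_{2}^{1/2}\|\bnabla\bw\|_{2}^{1/2}$ and the local $L^{4}$-integrability of $\bar{\bu}\in\dot{H}_{\sigma}^{1}(\Omega)$. Continuity together with coercivity then yields the sector condition $|\operatorname{Im}a_{\mathcal{L}}(\bv,\bv)|\leq M\bigl(\operatorname{Re}a_{\mathcal{L}}(\bv,\bv)+\lambda\|\bv\|_{H}^{2}\bigr)$ at no extra cost, and Kato's theorem produces the m-sectorial operator on $H$ whose graph coincides with the closure of $\mathcal{L}$ initially defined on $C_{0,\sigma}^{\infty}(\Omega)$.

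The identical argument applied to $a_{\mathcal{L}^{*}}$, which is related to $a_{\mathcal{L}}$ by $a_{\mathcal{L}^{*}}(\bv,\bphi)=\overline{a_{\mathcal{L}}(\bphi,\bv)}$, yields the corresponding statement for $\mathcal{L}^{*}$, which is in fact the Hilbert-space adjoint of the closure of $\mathcal{L}$. The main obstacle is the continuity of the convective bilinear terms in the two-dimensional exterior setting: $\dot{H}^{1}$ embeds only into $\BMO$ and not into any global $L^{p}$, and the Hardy inequality acquires a logarithmic correction, as witnessed by \lemref{hardy}. This difficulty is circumvented by the integration-by-parts trick that moves every $\bnabla$ off $\bar{\bu}$ onto the test function, so that only the local integrability of $\bar{\bu}$ (available from $\dot{H}_{\sigma}^{1}$ via Sobolev embedding in two dimensions) is required, with $V$ itself supplying the missing integrability through Gagliardo–Nirenberg.
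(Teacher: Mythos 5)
Your proposal is correct and follows essentially the same route as the paper: both verify that the sesquilinear forms $a_{\mathcal{L}}$, $a_{\mathcal{L}^{*}}$ are bounded on $H_{0,\sigma}^{1}(\Omega)$ and coercive via Hypothesis~\ref{hyp:main} (the identity $a_{\mathcal{L}}(\bv,\bv)=\left\Vert \bnabla\bv\right\Vert _{2}^{2}-\bigl(\bv\bcdot\bnabla\bv,\bar{\bu}\bigr)$ being the paper's \eqref{eq:accretive}), and then invoke the standard sectorial-form machinery to produce the analytic semigroup. The only cosmetic difference is in the continuity estimate, where the paper bounds the convective terms directly using $\left\Vert \bar{\bu}\right\Vert _{\infty}$ and $\left\Vert \bnabla\bar{\bu}\right\Vert _{2}$ together with the embedding $H_{0,\sigma}^{1}(\Omega)\hookrightarrow L^{4}(\Omega)$, while you integrate by parts first; both variants tacitly assume some global integrability of $\bar{\bu}$ beyond $\dot{H}_{\sigma}^{1}(\Omega)$, so neither is more rigorous than the other on that point.
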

\begin{proof}
First of all, the forms $a_{\mathcal{L}}$ and $a_{\mathcal{L}^{*}}$
are bounded on $H_{0,\sigma}^{1}(\Omega)$,
\begin{align*}
\left|a_{\mathcal{L}}(\bv,\bphi)\right| & \leq\left\Vert \bnabla\bv\right\Vert _{2}\left\Vert \bnabla\bphi\right\Vert _{2}+\left\Vert \bar{\bu}\right\Vert _{\infty}\left\Vert \bnabla\bv\right\Vert _{2}\left\Vert \bphi\right\Vert _{2}+\left\Vert \bnabla\bar{\bu}\right\Vert _{2}\left\Vert \bv\right\Vert _{4}\left\Vert \bphi\right\Vert _{4}\,,\\
\left|a_{\mathcal{L}^{*}}(\bv,\bphi)\right| & \leq\left\Vert \bnabla\bv\right\Vert _{2}\left\Vert \bnabla\bphi\right\Vert _{2}+\left\Vert \bar{\bu}\right\Vert _{\infty}\left\Vert \bnabla\bv\right\Vert _{2}\left\Vert \bphi\right\Vert _{2}+\left\Vert \bnabla\bar{\bu}\right\Vert _{2}\left\Vert \bv\right\Vert _{4}\left\Vert \bphi\right\Vert _{4}\,,
\end{align*}
since $H_{0,\sigma}^{1}(\Omega)$ is continuously embedded in $L^{4}(\Omega)$.
Moreover by using \hypref{main},
\begin{equation}
a_{\mathcal{L}}(\bv,\bv)=a_{\mathcal{L}^{*}}(\bv,\bv)=\left\Vert \bnabla\bv\right\Vert _{2}^{2}-\bigl(\bv\bcdot\bnabla\bv,\bar{\bu}\bigr)\ge\left(1-\delta\right)\left\Vert \bnabla\bv\right\Vert _{2}^{2}\,.\label{eq:accretive}
\end{equation}
Therefore, by using for example \citet[Proposition 4.1]{Karch.Pilarczyk-AsymptoticStabilityof2011}
or the references therein, we obtain that $-\mathcal{L}$ and $-\mathcal{L}^{*}$
generate analytic semigroups on $L_{\sigma}^{2}(\Omega)$.
\end{proof}
We have the following standard corollaries:
\begin{cor}
\label{cor:L}For any $\bv_{0}\in L_{\sigma}^{2}(\Omega)$, we have
\begin{align*}
\bigl\Vert\e^{-t\mathcal{L}}\bv_{0}\bigr\Vert_{2} & \leq\bigl\Vert\bv_{0}\bigr\Vert_{2}\,, & \lim_{t\to\infty}\bigl\Vert\e^{-t\mathcal{L}}\bv_{0}\bigr\Vert_{2} & =0\,, & \lim_{t\to\infty}\frac{1}{t}\int_{0}^{t}\bigl\Vert\e^{-s\mathcal{L}}\bv_{0}\bigr\Vert_{2}\rd s & =0\,.
\end{align*}
\end{cor}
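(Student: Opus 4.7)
The plan is to derive all three assertions from the coercivity \eqref{accretive} together with the analyticity of $\e^{-t\mathcal{L}}$ established in the preceding proposition. For the first (contractivity), I would take $\bv_0 \in D(\mathcal{L})$, set $\bv(t)=\e^{-t\mathcal{L}}\bv_0$, and compute
$$\frac{1}{2}\frac{\rd}{\rd t}\bigl\Vert\bv(t)\bigr\Vert_2^2 = -a_{\mathcal{L}}(\bv(t),\bv(t)) \leq -(1-\delta)\bigl\Vert\bnabla\bv(t)\bigr\Vert_2^2 \leq 0,$$
using \eqref{accretive}; integration and density of $D(\mathcal{L})$ in $L_{\sigma}^{2}(\Omega)$ then give $\bigl\Vert\bv(t)\bigr\Vert_2 \leq \bigl\Vert\bv_0\bigr\Vert_2$ for every initial datum.

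For the second (decay to zero), I would first observe that $\mathcal{L}$ has trivial kernel on $H_{0,\sigma}^{1}(\Omega)$: if $\mathcal{L}\bw=\bzero$, then $a_{\mathcal{L}}(\bw,\bw)=0$, so \eqref{accretive} forces $\bnabla\bw=\bzero$, and the boundary condition $\bw|_{\partial\Omega}=\bzero$ yields $\bw=\bzero$; the same argument applied to $a_{\mathcal{L}^{*}}$ shows $\ker\mathcal{L}^{*}=\{\bzero\}$, hence $\mathrm{range}(\mathcal{L})$ is dense in $L_{\sigma}^{2}(\Omega)$. Because the semigroup is analytic, for any $\bv_0=\mathcal{L}\bff$ with $\bff\in D(\mathcal{L})$ one has
$$\bigl\Vert\e^{-t\mathcal{L}}\bv_0\bigr\Vert_2 = \bigl\Vert\partial_t \e^{-t\mathcal{L}}\bff\bigr\Vert_2 \leq \frac{C}{t}\bigl\Vert\bff\bigr\Vert_2 \longrightarrow 0.$$
Combined with contractivity and the density of the range, a standard $\varepsilon/2$ approximation argument extends this decay to every $\bv_0\in L_{\sigma}^{2}(\Omega)$.

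The third (Cesàro) assertion is then immediate: given $\varepsilon>0$, choose $T$ so that $\bigl\Vert\e^{-s\mathcal{L}}\bv_0\bigr\Vert_2<\varepsilon$ for $s\geq T$ and split the integral into $[0,T]$ and $[T,t]$, giving the bound $T\bigl\Vert\bv_0\bigr\Vert_2/t+\varepsilon < 2\varepsilon$ once $t$ is sufficiently large. The main obstacle I expect is in cleanly packaging the strong-decay argument of step two; the most natural route is to appeal to an abstract semigroup result in the same spirit as \citet[Proposition 4.1]{Karch.Pilarczyk-AsymptoticStabilityof2011} (or the references therein), which precisely delivers strong decay to zero for a bounded analytic semigroup on a Hilbert space generated by an accretive operator with trivial kernel.
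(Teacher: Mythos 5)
Your proposal is correct and follows essentially the same route as the paper: the energy identity coming from \eqref{accretive} gives contractivity, the decay is obtained by approximating $\bv_{0}$ by an element $\mathcal{L}\bphi$ of the (dense) range and using the analytic-semigroup bound $\bigl\Vert\mathcal{L}\e^{-t\mathcal{L}}\bphi\bigr\Vert_{2}\leq Ct^{-1}\left\Vert \bphi\right\Vert$, and the Ces\`aro limit is then routine. The only differences are cosmetic: you justify the density of the range via $\ker\mathcal{L}^{*}=\{\bzero\}$ (which the paper merely asserts), and you prove the third assertion by splitting the integral at a large time $T$ where the paper rescales $s=\tau t$ and invokes dominated convergence.
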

\begin{proof}
The solution $\bv(t)=\e^{-t\mathcal{L}}\bv_{0}$ satisfies the following
energy inequality,
\[
\frac{1}{2}\frac{\rd}{\rd t}\left\Vert \bv\right\Vert _{2}^{2}+\left(1-\delta\right)\left\Vert \bnabla\bv\right\Vert _{2}=0\,.
\]
The first bound is proven by integrating from $0$ to $t$.

For the second bound, since the range of $\mathcal{L}$ is dense in
$L_{\sigma}^{2}(\Omega)$, for every $\varepsilon>0$ there exists
$\bphi\in\mathcal{D}(\mathcal{L})$ such that $\left\Vert \bv_{0}-\mathcal{L}\bphi\right\Vert _{2}\leq\varepsilon$.
Therefore, by using the first bound,
\[
\bigl\Vert\e^{-t\mathcal{L}}\bv_{0}\bigr\Vert_{2}\leq\bigl\Vert\e^{-t\mathcal{L}}\left(\bv_{0}-\mathcal{L}\bphi\right)\bigr\Vert_{2}+\bigl\Vert\mathcal{L}\e^{-t\mathcal{L}}\bphi\bigr\Vert_{2}\leq\varepsilon+Ct^{-1}\left\Vert \bphi\right\Vert \,,
\]
so the second bound is proven.

Substituting $s=\tau t$, we have
\[
\frac{1}{t}\int_{0}^{t}\bigl\Vert\e^{-s\mathcal{L}}\bv_{0}\bigr\Vert_{2}\rd s=\int_{0}^{1}\bigl\Vert\e^{-t\tau\mathcal{L}}\bv_{0}\bigr\Vert_{2}\rd\tau\leq\bigl\Vert\bv_{0}\bigr\Vert_{2}\,,
\]
since the semigroup is contracting. Therefore, the third bound follows
by using the second bound together with the dominated convergence
theorem.
\end{proof}

\begin{cor}
\label{cor:Lstar}For any $\bv_{0}\in L_{\sigma}^{2}(\Omega)$, we
have
\[
\bigl\Vert\bnabla\e^{-t\mathcal{L}^{*}}\bv_{0}\bigr\Vert_{2}\leq Ct^{-1/2}\bigl\Vert\bv_{0}\bigr\Vert_{2}\,.
\]
\end{cor}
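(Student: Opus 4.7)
The plan is to exploit two ingredients already available: the analytic semigroup estimate for $e^{-t\mathcal{L}^{*}}$ coming from the previous proposition, and the coercivity of the bilinear form $a_{\mathcal{L}^{*}}$ coming from \hypref{main}.

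First, since $-\mathcal{L}^{*}$ generates an analytic semigroup on $L_{\sigma}^{2}(\Omega)$, the standard smoothing estimate
\[
\bigl\Vert \mathcal{L}^{*} \e^{-t\mathcal{L}^{*}} \bv_{0} \bigr\Vert_{2} \leq \frac{C}{t} \bigl\Vert \bv_{0} \bigr\Vert_{2}
\]
holds for all $t>0$. Moreover, analyticity guarantees $\bv(t):=\e^{-t\mathcal{L}^{*}}\bv_{0}\in\mathcal{D}(\mathcal{L}^{*})\subset H_{0,\sigma}^{1}(\Omega)$ for every $t>0$, so the form identity $a_{\mathcal{L}^{*}}(\bv(t),\bv(t))=\bigl(\mathcal{L}^{*}\bv(t),\bv(t)\bigr)$ is legitimate.

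Second, by \eqref{accretive} applied to $a_{\mathcal{L}^{*}}$, the coercivity estimate
\[
(1-\delta)\left\Vert \bnabla\bv(t)\right\Vert _{2}^{2} \leq a_{\mathcal{L}^{*}}(\bv(t),\bv(t)) = \bigl(\mathcal{L}^{*}\bv(t),\bv(t)\bigr)
\]
holds. Combining this with Cauchy--Schwarz and the smoothing estimate (using also the contraction bound $\bigl\Vert\bv(t)\bigr\Vert_{2}\leq\bigl\Vert\bv_{0}\bigr\Vert_{2}$ from \corref{L} applied to $\mathcal{L}^{*}$), I obtain
\[
(1-\delta)\left\Vert \bnabla\bv(t)\right\Vert _{2}^{2} \leq \bigl\Vert \mathcal{L}^{*}\bv(t)\bigr\Vert_{2}\,\bigl\Vert\bv(t)\bigr\Vert_{2} \leq \frac{C}{t}\bigl\Vert \bv_{0}\bigr\Vert_{2}^{2},
\]
from which the claim follows by taking a square root and absorbing $(1-\delta)^{-1/2}$ into the constant.

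There is no serious obstacle here: the only mild point is justifying the form identity $a_{\mathcal{L}^{*}}(\bv,\bv)=(\mathcal{L}^{*}\bv,\bv)$, which is handled by the analyticity of the semigroup (guaranteeing $\bv(t)$ lies in $\mathcal{D}(\mathcal{L}^{*})$ for $t>0$) together with the definition of $\mathcal{L}^{*}$ via the bounded bilinear form $a_{\mathcal{L}^{*}}$ on $H_{0,\sigma}^{1}(\Omega)$. Both the contraction bound $\bigl\Vert\e^{-t\mathcal{L}^{*}}\bv_{0}\bigr\Vert_{2}\leq\bigl\Vert\bv_{0}\bigr\Vert_{2}$ and the analytic smoothing $\bigl\Vert t\mathcal{L}^{*}\e^{-t\mathcal{L}^{*}}\bigr\Vert\leq C$ are immediate consequences of accretivity of $\mathcal{L}^{*}$ on $L_{\sigma}^{2}(\Omega)$, so the whole argument reduces to one line once these are in place.
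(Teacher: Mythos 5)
Your proposal is correct and follows essentially the same route as the paper: coercivity of $a_{\mathcal{L}^{*}}$ from \eqref{accretive}, Cauchy--Schwarz, the contraction bound, and the analytic smoothing estimate $\bigl\Vert\mathcal{L}^{*}\e^{-t\mathcal{L}^{*}}\bv_{0}\bigr\Vert_{2}\leq Ct^{-1}\bigl\Vert\bv_{0}\bigr\Vert_{2}$. The only difference is that you spell out the domain/regularity justification that the paper leaves implicit, which is a harmless refinement rather than a change of approach.
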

\begin{proof}
Using \eqref{accretive}, for $\bv\in L_{\sigma}^{2}(\Omega)$, we
have
\[
\left(1-\delta\right)\left\Vert \bnabla\bv\right\Vert _{2}^{2}\leq a_{\mathcal{L}^{*}}(\bv,\bv)=\bigl(\mathcal{L}^{*}\bv,\bv\bigr)\leq\left\Vert \mathcal{L}^{*}\bv\right\Vert _{2}\left\Vert \bv\right\Vert _{2}\,.
\]
Therefore, taking $\bv=\e^{-t\mathcal{L}^{*}}\bv_{0}$, we obtain
\[
\left\Vert \bnabla\bv\right\Vert _{2}^{2}\leq\left(1-\delta\right)^{-1}\bigl\Vert\mathcal{L}^{*}\e^{-t\mathcal{L}^{*}}\bv_{0}\bigr\Vert_{2}\left\Vert \bv_{0}\right\Vert _{2}\leq C^{2}t^{-1}\left\Vert \bv_{0}\right\Vert _{2}^{2}\,.
\]

\end{proof}

\section{Asymptotic stability}

Using the properties of the linearized system, we now prove the result
on the asymptotic stability (\thmref{stability}).
\begin{lem}
\textup{\label{lem:nonlinear}There exists $C>0$ such that for all
$\bv\in H_{0,\sigma}^{1}(\Omega)$ and $\bphi\in L_{\sigma}^{2}(\Omega)$,
\[
\left(\bv\bcdot\bnabla\bv,\e^{-t\mathcal{L}^{*}}\bphi\right)\leq Ct^{-1/2}\left\Vert \bv\right\Vert _{2}\left\Vert \bnabla\bv\right\Vert _{2}\left\Vert \bphi\right\Vert _{2}\,.
\]
}\end{lem}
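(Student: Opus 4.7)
The plan is to integrate by parts to move the derivative off $\bv$ and onto the semigroup term, then combine the two-dimensional Ladyzhenskaya inequality with the gradient bound on $\e^{-t\mathcal{L}^{*}}\bphi$ from Corollary \ref{cor:Lstar}.

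First I would note that since $\bnabla\bcdot\bv=0$ and $\bv$ vanishes on $\partial\Omega$, an integration by parts yields
\[
\bigl(\bv\bcdot\bnabla\bv,\bw\bigr)=-\bigl(\bv\otimes\bv,\bnabla\bw\bigr)
\]
for any sufficiently regular $\bw$; the boundary term vanishes by the trace of $\bv$ and the divergence term vanishes by incompressibility. Applying Hölder gives the straightforward bound
\[
\bigl|\bigl(\bv\bcdot\bnabla\bv,\bw\bigr)\bigr|\leq\left\Vert \bv\right\Vert _{4}^{2}\left\Vert \bnabla\bw\right\Vert _{2}\,.
\]

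Next, invoking the two-dimensional Ladyzhenskaya (Gagliardo–Nirenberg) inequality for $\bv\in H_{0,\sigma}^{1}(\Omega)$,
\[
\left\Vert \bv\right\Vert _{4}^{2}\leq C\left\Vert \bv\right\Vert _{2}\left\Vert \bnabla\bv\right\Vert _{2}\,,
\]
allows one to interpolate the $L^{4}$-norm. Finally, setting $\bw=\e^{-t\mathcal{L}^{*}}\bphi$, Corollary \ref{cor:Lstar} gives
\[
\bigl\Vert\bnabla\e^{-t\mathcal{L}^{*}}\bphi\bigr\Vert_{2}\leq Ct^{-1/2}\left\Vert \bphi\right\Vert _{2}\,,
\]
and combining the three inequalities produces precisely the claim.

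Nothing in this argument is subtle: the inequality is just a repackaging of the standard $H^{1}$-smoothing estimate for the adjoint linearized semigroup, together with the well-known 2D interpolation estimate for the convective trilinear form. The only point that required work was contained in Corollary \ref{cor:Lstar} itself, which relied on the coercivity \eqref{accretive} guaranteed by \hypref{main}. Consequently I expect no genuine obstacle here beyond carefully checking that the boundary and divergence terms in the integration by parts are admissible for $\bv\in H_{0,\sigma}^{1}(\Omega)$ and $\e^{-t\mathcal{L}^{*}}\bphi\in\mathcal{D}(\mathcal{L}^{*})\subset H_{0,\sigma}^{1}(\Omega)$, which follows from the density of $C_{0,\sigma}^{\infty}(\Omega)$ in $H_{0,\sigma}^{1}(\Omega)$.
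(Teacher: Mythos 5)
Your proposal is correct and follows essentially the same route as the paper: integrate by parts using incompressibility and the vanishing trace to move the derivative onto $\e^{-t\mathcal{L}^{*}}\bphi$, apply H\"older to get $\left\Vert \bv\right\Vert _{4}^{2}\bigl\Vert\bnabla\e^{-t\mathcal{L}^{*}}\bphi\bigr\Vert_{2}$, and conclude with the two-dimensional Ladyzhenskaya inequality together with \corref{Lstar}. The only cosmetic difference is that the paper writes the dualized term as $\bigl(\bv\bcdot\bnabla\e^{-t\mathcal{L}^{*}}\bphi,\bv\bigr)$ rather than $-\bigl(\bv\otimes\bv,\bnabla\bw\bigr)$, which is the same identity.
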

\begin{proof}
Using \corref{Lstar}, we have
\[
\left|\left(\bv\bcdot\bnabla\bv,\e^{-t\mathcal{L}^{*}}\bphi\right)\right|=\left|\left(\bv\bcdot\bnabla\e^{-t\mathcal{L}^{*}}\bphi,\bv\right)\right|\leq\left\Vert \bv\right\Vert _{4}^{2}\bigl\Vert\bnabla\e^{-t\mathcal{L}^{*}}\bphi\bigr\Vert_{2}\leq Ct^{-1/2}\left\Vert \bv\right\Vert _{4}^{2}\left\Vert \bphi\right\Vert _{2}\,.
\]
The result follows by applying the Sobolev inequality $\left\Vert \bv\right\Vert _{4}^{4}\leq2\left\Vert \bv\right\Vert _{2}^{2}\left\Vert \bnabla\bv\right\Vert _{2}^{2}$.
\end{proof}
The following lemma will be crucial in the proof of the asymptotic
stability without using any spectral decomposition.
\begin{lem}
\label{lem:limit-zero}For a nonnegative function $f\in L^{2}(0,\infty)$,
we have
\[
\lim_{t\to0}\frac{1}{t}\int_{0}^{t}\int_{0}^{s}(s-\tau)^{-1/2}f(\tau)\,\rd\tau\,\rd s=0\,.
\]
\end{lem}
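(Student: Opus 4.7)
The plan is to reduce the double integral to a single integral and then apply Cauchy--Schwarz. Since the integrand is nonnegative, Fubini's theorem applies without hypothesis, giving
\[
\int_{0}^{t}\int_{0}^{s}(s-\tau)^{-1/2}f(\tau)\,\rd\tau\,\rd s
=\int_{0}^{t}f(\tau)\int_{\tau}^{t}(s-\tau)^{-1/2}\,\rd s\,\rd\tau
=2\int_{0}^{t}(t-\tau)^{1/2}f(\tau)\,\rd\tau.
\]
The singular kernel $(s-\tau)^{-1/2}$ has therefore been replaced by the bounded factor $(t-\tau)^{1/2}$, and everything has become locally $L^{2}$-friendly.

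Next I would apply Cauchy--Schwarz to the single integral, using that $\int_{0}^{t}(t-\tau)\,\rd\tau=t^{2}/2$:
\[
2\int_{0}^{t}(t-\tau)^{1/2}f(\tau)\,\rd\tau
\leq 2\left(\int_{0}^{t}(t-\tau)\,\rd\tau\right)^{1/2}\left(\int_{0}^{t}f(\tau)^{2}\,\rd\tau\right)^{1/2}
=\sqrt{2}\,t\,\bigl\Vert f\bigr\Vert_{L^{2}(0,t)}.
\]
Dividing by $t$ yields
\[
\frac{1}{t}\int_{0}^{t}\int_{0}^{s}(s-\tau)^{-1/2}f(\tau)\,\rd\tau\,\rd s
\leq\sqrt{2}\,\bigl\Vert f\bigr\Vert_{L^{2}(0,t)}.
\]

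To finish, since $f\in L^{2}(0,\infty)$ one has $f^{2}\in L^{1}(0,\infty)$, and absolute continuity of the Lebesgue integral (equivalently dominated convergence applied to $f^{2}\mathbbm{1}_{[0,t]}$) gives $\bigl\Vert f\bigr\Vert_{L^{2}(0,t)}\to 0$ as $t\to 0$. Combining with the bound above yields the claim.

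There is essentially no obstacle: the whole content of the lemma is the Fubini rearrangement, after which the rest is a one-line Cauchy--Schwarz plus absolute continuity of the integral. A naive attempt to estimate the double integral directly (e.g.\ by bounding the inner integral pointwise in $s$ via Cauchy--Schwarz, which fails because $(s-\tau)^{-1}$ is not integrable on $(0,s)$, or by invoking Hardy--Littlewood--Sobolev fractional integration) would be technically heavier and would miss the cancellation coming from first integrating in $s$.
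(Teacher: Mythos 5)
Your Tonelli reduction to $\frac{2}{t}\int_{0}^{t}(t-\tau)^{1/2}f(\tau)\,\rd\tau$ is exactly the paper's first step, and your argument is a correct proof of the statement \emph{as literally written}. The trouble is that the limit $t\to0$ in the statement is a typo for $t\to\infty$: the paper's own proof computes $\lim_{t\to\infty}$ throughout, and the lemma is invoked in the proof of \thmref{stability} to conclude $\lim_{t\to\infty}\frac{1}{t}\int_{0}^{t}\|\bv(s)\|_{2}\,\rd s=0$ from $\|\bnabla\bv(\cdot)\|_{2}\in L^{2}(0,\infty)$; a $t\to0$ statement is useless there. For the intended limit your Cauchy--Schwarz step is fatal: the bound $\sqrt{2}\,\|f\|_{L^{2}(0,t)}$ \emph{increases} to $\sqrt{2}\,\|f\|_{L^{2}(0,\infty)}>0$ as $t\to\infty$, so it yields only boundedness of $I(t)$, not decay. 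The ``cancellation'' you attribute to integrating in $s$ first is not what drives the result.

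What the paper does after the same Fubini step is use the cruder pointwise bound $(t-\tau)^{1/2}\le t^{1/2}$ to get $I(t)\le 2t^{-1/2}\int_{0}^{t}f=2\|\chi_{t}f\|_{1}$ with $\chi_{t}=t^{-1/2}\mathbbm{1}_{[0,t]}$, and then observes that $\chi_{t}$ has unit $L^{2}$-norm and converges pointwise to $0$, hence $\chi_{t}\rightharpoonup0$ weakly in $L^{2}(0,\infty)$, so $\|\chi_{t}f\|_{1}=(\chi_{t},f)\to0$. Your approach can be repaired without weak convergence by splitting $\int_{0}^{t}=\int_{0}^{A}+\int_{A}^{t}$: on $[0,A]$ use $(t-\tau)^{1/2}\le t^{1/2}$ and Cauchy--Schwarz to get $2t^{-1/2}A^{1/2}\|f\|_{2}\to0$ as $t\to\infty$ for fixed $A$, and on $[A,t]$ your Cauchy--Schwarz gives $\sqrt{2}\,\|f\|_{L^{2}(A,\infty)}$, which is small for $A$ large. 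As written, however, the proof does not establish the result the paper actually needs.
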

\begin{proof}
We first prove that
\[
\lim_{t\to\infty}\left\Vert \chi_{t}f\right\Vert _{1}=0\,,\qquad\text{where}\qquad\chi_{t}=\frac{1}{\sqrt{t}}\mathbbm{1}_{[0,t]}\,.
\]
Since $\left\Vert \chi_{t}\right\Vert _{2}=1$ and $\chi_{t}$ converges
pointwise to $0$ as $t\to\infty$, we obtain that $\chi_{t}$ converges
weakly to $0$ in $L^{2}(0,\infty)$ as $t\to\infty$, see \citet[Theorem 13.44]{Hewitt-RealandAbstract1965}.
Therefore $\lim_{t\to\infty}\left\Vert \chi_{t}f\right\Vert _{1}=0$
since $f\in L^{2}(0,\infty)$.

By interchanging the order of integrations, we have
\begin{align*}
I(t) & =\frac{1}{t}\int_{0}^{t}\int_{0}^{s}(s-\tau)^{-1/2}f(\tau)\,\rd\tau\,\rd s=\frac{1}{t}\int_{0}^{t}\int_{\tau}^{t}(s-\tau)^{-1/2}f(\tau)\,\rd s\,\rd\tau\\
 & =\frac{2}{t}\int_{0}^{t}\left(t-\tau\right)^{1/2}f(\tau)\,\rd\tau\leq\frac{2}{\sqrt{t}}\int_{0}^{t}f(\tau)\,\rd\tau=2\left\Vert \chi_{t}f\right\Vert _{1}\,,
\end{align*}
where the Hölder inequality justifies the interchange of the integrations,
since $\left\Vert \chi_{t}f\right\Vert _{1}\leq\left\Vert \chi_{t}\right\Vert _{2}\left\Vert f\right\Vert _{2}\leq\left\Vert f\right\Vert _{2}$.
The lemma is proven since $\lim_{t\to\infty}I(t)\leq2\lim_{t\to\infty}\left\Vert \chi_{t}f\right\Vert _{1}=0$.
\end{proof}

\begin{proof}[Proof of \thmref{stability}]
Let $\bphi\in C_{0,\sigma}^{\infty}(\Omega)$, $s>0$, and
\[
\bpsi(\tau)=\e^{-(s-\tau)\mathcal{L}^{*}}\bphi\,,
\]
for $\tau\in(0,s)$. Since
\begin{align*}
\int_{0}^{s}\bigl(\bv,\dot{\bpsi}\bigr)\rd\tau & =\int_{0}^{s}\bigl(\bv,\mathcal{L}^{*}\bpsi\bigr)\rd\tau=\int_{0}^{s}\bigl(\mathcal{L}\bv,\bpsi\bigr)\rd\tau\\
 & =\int_{0}^{s}\Big[\left(\bnabla\bv,\bnabla\bpsi\right)+\left(\bar{\bu}\bcdot\bnabla\bv,\bpsi\right)+\left(\bv\bcdot\bnabla\bar{\bu},\bpsi\right)\Big]\rd\tau\,,
\end{align*}
by \defref{weak-solution}, we obtain
\[
\big(\bv(s),\bphi\bigr)+\int_{0}^{s}\left(\bv\bcdot\bnabla\bv,\bpsi\right)\rd\tau=\big(\bv_{0},\bpsi(0)\bigr)=\big(\e^{-s\mathcal{L}}\bv_{0},\bphi\bigr)\,.
\]
Taking $\bphi=\bv(s)$ in this expression,
\[
\left\Vert \bv(s)\right\Vert _{2}^{2}=\big(\e^{-s\mathcal{L}}\bv_{0},\bv(s)\bigr)-\int_{0}^{s}\left((\bv\bcdot\bnabla\bv)(\tau),\e^{-(s-\tau)\mathcal{L}^{*}}\bv(s)\right)\rd\tau\,,
\]
so by using \lemref{nonlinear}, we have
\[
\left\Vert \bv(s)\right\Vert _{2}\leq\bigl\Vert\e^{-s\mathcal{L}}\bv_{0}\bigr\Vert_{2}+C\int_{0}^{s}(s-\tau)^{-1/2}\left\Vert \bv(\tau)\right\Vert _{2}\left\Vert \bnabla\bv(\tau)\right\Vert _{2}\rd\tau\,.
\]
By the energy inequality \eqref{energy-inequality}, we have $\left\Vert \bv(\tau)\right\Vert _{2}\leq\left\Vert \bv_{0}\right\Vert _{2}$,
so
\[
\left\Vert \bv(s)\right\Vert _{2}\leq\bigl\Vert\e^{-s\mathcal{L}}\bv_{0}\bigr\Vert_{2}+C\left\Vert \bv_{0}\right\Vert _{2}\int_{0}^{s}(s-\tau)^{-1/2}\left\Vert \bnabla\bv(\tau)\right\Vert _{2}\rd\tau\,.
\]
Integrating on $s$ from $0$ to $t$ and multiplying by $t^{-1}$,
we obtain
\[
\frac{1}{t}\int_{0}^{t}\left\Vert \bv(s)\right\Vert _{2}\rd s\leq\frac{1}{t}\int_{0}^{t}\bigl\Vert\e^{-s\mathcal{L}}\bv_{0}\bigr\Vert_{2}\,\rd s+C\left\Vert \bv_{0}\right\Vert _{2}\int_{0}^{t}\int_{0}^{s}(s-\tau)^{-1/2}\left\Vert \bnabla\bv(\tau)\right\Vert _{2}\rd\tau\,\rd s\,.
\]
By the energy inequality, $\left\Vert \bnabla\bv(\cdot)\right\Vert _{2}\in L^{2}(0,\infty)$,
so in view of \corref{L} and \lemref{limit-zero} we have
\[
\lim_{t\to\infty}\frac{1}{t}\int_{0}^{t}\left\Vert \bv(s)\right\Vert _{2}\rd s=0\,.
\]
Finally, since $\left\Vert \bv(t)\right\Vert _{2}$ is a non-increasing
function of $t\geq0$, we obtain for $t>0$,
\[
\left\Vert \bv(t)\right\Vert _{2}\leq\frac{1}{t}\left\Vert \bv(t)\right\Vert _{2}\int_{0}^{t}\rd s\leq\frac{1}{t}\int_{0}^{t}\left\Vert \bv(s)\right\Vert _{2}\rd s\,,
\]
so the proof is complete.
\end{proof}

\subsubsection*{Acknowledgments}

The author would like to thank J. Feneuil for fruitful discussions
and T. Gallay for valuable comments on a preliminary version of the
manuscript. This research was supported by the Swiss National Science
Foundation grant \href{http://p3.snf.ch/Project-161996}{161996}.

\newpage{}

\bibliographystyle{merlin-dot}
\phantomsection\addcontentsline{toc}{section}{\refname}\bibliography{paper}

\end{document}